\setlist[enumerate]{labelsep=*, leftmargin=1.5pc}
\setlist[enumerate]{label=\normalfont(\roman*), ref=\roman*}
\newcommand{\Z}{\mathbb{Z}}
\newcommand{\Q}{\mathbb{Q}}
\newcommand{\R}{\mathbb{R}}
\renewcommand{\P}{\mathbb{P}}
\newcommand{\NQ}{N_\Q}
\newcommand{\MQ}{M_\Q}
\newcommand{\conv}{\operatorname{conv}}
\newcommand{\Chi}{\mathcal{X}}
\newcommand{\GL}{\mathrm{GL}}
\newcommand{\IK}{\operatorname{IK}}
\newcommand{\mP}{\operatorname{mult}P}
\newcommand{\mX}{\operatorname{mult}X}
\newcommand{\mult}{\operatorname{mult}}
\newcommand{\Hom}{\operatorname{Hom}}
\newcommand{\Vol}{\operatorname{Vol}}
\newcommand{\relint}{\operatorname{relint}}
\theoremstyle{plain}
\newtheorem{theorem}{Theorem}[section]
\newtheorem{proposition}[theorem]{Proposition}
\newtheorem{lemma}[theorem]{Lemma}
\newtheorem{corollary}[theorem]{Corollary}
\newtheorem{conjecture}{Conjecture}
\theoremstyle{definition}
\newtheorem{definition}[theorem]{Definition}
\newtheorem{example}[theorem]{Example}
\newtheorem*{acknowledgments}{Acknowledgments}
\begin{document}
\author[G.\ Averkov]{Gennadiy Averkov}
\address{BTU Cottbus-Senftenberg\\Platz der Deutschen Einheit 1\\03046 Cottbus\\Germany}
\email{averkov@b-tu.de}
\author[A.\,M.\,Kasprzyk]{Alexander Kasprzyk}
\address{School of Mathematical Sciences\\University of Nottingham\\Nottingham, NG$7$\ $2$RD\\UK}
\email{a.m.kasprzyk@nottingham.ac.uk}
\author[M.\ Lehmann]{Martin Lehmann}
\address{Fakult\"at f\"ur Mathematik\\OVGU Magdeburg\\Universit\"atsplatz 2\\39106 Magdeburg\\Germany}
\author[B.\ Nill]{Benjamin Nill}
\address{Fakult\"at f\"ur Mathematik\\OVGU Magdeburg\\Universit\"atsplatz 2\\39106 Magdeburg\\Germany}
\email{benjamin.nill@ovgu.de}
\keywords{Weighted projected space; canonical singularity; barycentric coordinate; lattice simplex.}
\subjclass[2010]{14M25 (Primary); 52B20 (Secondary)}
\title[Bounds on canonical fake weighted projective space]{Sharp bounds on fake weighted projective spaces with canonical singularities}
\maketitle
\begin{abstract}
We give a sharp upper bound on the multiplicity of a fake weighted projective space with at worst canonical singularities. This is equivalent to giving a sharp upper bound on the index of the sublattice generated by the vertices of a lattice simplex containing only the origin as an interior lattice point. We also completely characterise when equality occurs and discuss related questions and conjectures.
\end{abstract}
\section{Introduction}\label{sec:introduction}
A \emph{fake weighted projective space}~$X$ is a~$\Q$-factorial complete normal toric variety of Picard rank one. More concretely,~$X$ is given by the quotient~$Y/G$ of some weighted projective space~$Y=\P(\lambda_0, \ldots, \lambda_d)$ by the action of a finite abelian group~$G$ acting free in codimension one. The order~$|G|$ is called the \emph{multiplicity} of~$X$, denoted~$\mX$, and~$X$ is a weighted projective space if and only if~$\mX=1$ (in which case~$X\cong Y$). If~$X$ has at worst canonical singularities then so does~$Y$. In~\S\ref{sec:proof_of_results} we prove a sharp bound on the multiplicity in this case:

\begin{theorem}\label{main}
Let~$X$ be a~$d$-dimensional fake weighted projective space with at worst canonical singularities.
\begin{enumerate}
\item
If~$d \le 3$ then
\[
\mX \le (d+1)^{d-1}
\]
with equality if and only if~$X\cong\P^d/G$. 
\item
If~$d=4$ then
\[
\mX \le 128
\]
with equality if and only if~$X\cong\P(4,1,1,1,1)/G$.
\item\label{main:dim_ge_5}
If~$d\geq 5$ then
\[
\mX \le 3 (s_{d-1}-1)^2
\]
with equality if and only if~$X\cong\P\left(\frac{3 (s_{d-1}-1)}{s_1},\ldots,\frac{3 (s_{d-1}-1)}{s_{d-2}},1,1,1\right) / G$.
\end{enumerate}
In each case, equality is achieved by a unique $X$.
\end{theorem}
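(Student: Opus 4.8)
The plan is to pass from the geometry of~$X$ to a purely combinatorial optimization over lattice simplices, and then to a number-theoretic extremal problem governed by Sylvester's sequence. Recall that a~$d$-dimensional fake weighted projective space~$X$ is encoded by a lattice simplex~$P=\conv(v_0,\dots,v_d)\subset N\cong\Z^d$ whose vertices are primitive and whose unique interior lattice point is the origin (this is precisely the canonical condition), with~$\mX=[N:\langle v_0,\dots,v_d\rangle]$. Writing the primitive linear relation~$\sum_i\lambda_i v_i=0$ with coprime positive integers~$\lambda_i$ (the weights), the origin has barycentric coordinates~$\lambda_i/h$, where~$h:=\sum_i\lambda_i$, and the normalized volume satisfies~$\Vol_N(P)=\mX\cdot h$. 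Thus maximizing~$\mX$ is the same as maximizing~$\Vol_N(P)/h$ over all canonical simplices. I would first record this dictionary carefully, since the extremal configuration for~$\mX$ need not coincide with the maximal-volume simplex: reducing~$h$ by forcing several small weights can increase the ratio even at the cost of volume, which is exactly why the extremal example in~\eqref{main:dim_ge_5} carries three weights equal to~$1$.

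Second, I would reformulate the problem as an optimization over weight systems. The canonical (single interior point) constraint translates into inequalities on the reduced fractions~$\lambda_i/h$, of the same flavour as the unit-fraction conditions~$\sum_i 1/a_i < 1$ that produce Sylvester's sequence as the greedy optimizer; the quantity~$\Vol_N(P)/h$ is then bounded by an explicit function of~$(\lambda_0,\dots,\lambda_d)$ together with the admissible refinements~$N\supseteq\langle v_0,\dots,v_d\rangle$. The core combinatorial lemma I would aim for is that, after sorting the weights, an extremal system must satisfy the Sylvester recursion~$s_{k+1}=s_k^2-s_k+1$ (equivalently~$s_{k+1}-1=s_1\cdots s_k$), so that the denominators appearing are forced to be~$s_1,\dots,s_{d-2}$ together with the~$3(s_{d-1}-1)$ scaling and the three trailing~$1$'s; this is where the value~$3(s_{d-1}-1)^2$ emerges.

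Third, I would run the optimization by induction on~$d$, peeling off the largest weight and relating the~$d$-dimensional extremal problem to the~$(d-1)$-dimensional one, with the Sylvester recursion produced by the greedy step. The low-dimensional cases~$d\le 3$ and~$d=4$ do not yet exhibit the asymptotic pattern (for~$d=4$ the competitor~$\P(4,1,1,1,1)/G$ with~$\mX=128$ beats the value~$3(s_3-1)^2=108$ suggested by~\eqref{main:dim_ge_5}), so I would treat them by hand or by a finite search, matching them to~$\P^d/G$ and~$\P(4,1,1,1,1)/G$ respectively. Finally, for the equality characterization one must recover not only the weight system but the lattice~$N$ itself: given the extremal weights, I would show that the quotient~$N/\langle v_0,\dots,v_d\rangle$ realizing the bound is unique, pinning down a single~$X$ up to isomorphism.

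The step I expect to be the main obstacle is the core extremal lemma: proving that the Sylvester-type weight system is the \emph{unique} maximizer of~$\Vol_N(P)/h$, rather than merely establishing the upper bound. The difficulty is that~$\mX$ is genuinely not monotone in the natural parameters — it trades volume against the weight sum~$h$ — so a direct convexity or greedy argument does not immediately apply, and one must control simultaneously the admissible lattices~$N$ and the interior-point constraint. Carefully quantifying this trade-off, and verifying that the optimum has precisely three unit weights for all~$d\ge 5$ while excluding all nearby competitors, is the technical heart of the proof; the uniqueness of the extremal~$X$ then follows from a rigidity analysis of the refinement~$N\supseteq\langle v_0,\dots,v_d\rangle$.
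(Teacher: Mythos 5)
Your outline for the upper bound does track the paper's strategy: pass to the canonical lattice simplex, work with the barycentric coordinates $\lambda_i/h$ of the origin, reduce to an extremal problem whose optimizers are governed by the Sylvester sequence, run an induction on the dimension, and handle $d\le 4$ by finite search (your observation that $128>3(s_3-1)^2=108$ correctly explains why $d=4$ is exceptional). However, there are two genuine gaps. First, your reduction to an optimization problem is not actually set up. The identity $\Vol_N(P)=\mX\cdot h$ is correct but tautological as an objective: it does not bound $\mX$ by any function of the weights alone, since $\Vol_N(P)$ depends on the lattice refinement. The paper instead uses Kasprzyk's bound $\mX\le h^{d-1}/(\lambda_0\cdots\lambda_{d-1})\le \beta_{n}/(\beta_1\cdots\beta_{n-1})$ (Lemma~\ref{lemma-bound}) to obtain an objective in the barycentric coordinates only. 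More seriously, you never identify the constraints that make the optimization well-posed: the precise inequalities $x_1\cdots x_j\le x_{j+1}+\cdots+x_n$ defining the feasible region $\Chi^n$ are a nontrivial theorem of Averkov, and the solution of the resulting problem $\IK^n(x_1\cdots x_{n-1}x_n^{-1})$ is imported from~\cite{AKN15}; "inequalities of the same flavour as unit-fraction conditions" is not a substitute, and without the exact constraint set the claimed Sylvester-type extremal lemma cannot be proved.

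Second, and this is where you misjudge the difficulty: the technical heart of the paper is not the uniqueness of the optimizer in the weight optimization (that is already known from~\cite{AKN15} plus the elementary induction in Lemma~\ref{lemma1}), but the rigidity of the \emph{simplex} once the barycentric coordinates \eqref{eq:bary} are forced. Knowing the weights and the multiplicity does not determine $P$ up to $\GL(N)$-equivalence, and "a rigidity analysis of the refinement $N\supseteq\langle v_0,\dots,v_d\rangle$" is a placeholder, not an argument. The paper needs a genuinely geometric chain: a divisibility argument showing $P\cong\conv\{s_1e_1,\dots,s_{d-2}e_{d-2},a,b,c\}$ with $a+b+c=0$ (Lemma~\ref{lemma2}); a projection onto the last two coordinates producing a triangle of area $9(s_{d-1}-1)^2$; an Ehrhart-type argument combined with a half-open cube construction to show the rescaled triangle is $3\Delta_2$ (Lemma~\ref{lemma3}); and a lattice-basis count to conclude uniqueness. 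None of this is visible in your proposal, so the equality characterisation in parts (i)--(iii) --- in particular the uniqueness claim for $d\ge 5$ --- remains unproved.
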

The~$s_i$ in~\eqref{main:dim_ge_5} form the \emph{Sylvester sequence}~\cite[A000058]{OEIS} defined by~$s_1 := 2$ and~$s_i:=s_1 \cdots s_{i-1} + 1$ for~$i \ge 2$. We refer to  Proposition~\ref{main2} and Theorem~\ref{conj} for a precise description of the equality cases in Theorem~\ref{main}.

\section{Background}\label{sec:background}
\subsection{Toric geometry}
We begin by recalling some standard facts from toric geometry. For a concise introduction to toric geometry, see~\cite{Dan78}. Throughout let~$N\cong\Z^d$ denote a lattice of rank~$d$; that is,~$N$ corresponds to the lattice of one-parameter subgroups. A convex lattice polytope~$P\subset N_Q:=N\otimes_\Z\Q$ is said to be \emph{Fano} if:
\begin{enumerate}[label=(\alph*), ref=\alph*]
\item
$P$ is of maximum dimension in the underlying lattice;
\item\label{item:fano_origin}
the origin is contained in the strict interior of~$P$;
\item\label{item:fano_primitive}
the vertices of~$P$ are primitive lattice points.
\end{enumerate}

In addition to being fascinating combinatorial objects in their own right (see~\cite{KN13} for an overview), Fano polytopes are in bijective correspondence with toric Fano varieties. The complete fan in~$N$ generated by the facets of~$P$ -- that is, the \emph{spanning fan} of~$P$ -- corresponds to a projective toric variety~$X_P$ with ample anti-canonical divisor~$-K_X$. Two Fano polytopes~$P$ and~$P'$ correspond to isomorphic varieties~$X_P$ and~$X_{P'}$ if and only if there exists a change of basis of~$N$ sending~$P$ to~$P'$. Hence we regard~$P$ as being defined only up to~$\GL(N)$-equivalence.

Now consider the case when~$P:=\conv\{v_0,\ldots,v_d\}\subset\NQ$ is a Fano simplex. By~\eqref{item:fano_origin} there exists a unique choice of~$d+1$ coprime positive integers~$(\lambda_0,\ldots,\lambda_d)\in\Z_{>0}^{d+1}$ such that~$\lambda_0v_0+\cdots+\lambda_dv_d=0$. We call these integers the \emph{(reduced) weights} of~$P$. Furthermore,~\eqref{item:fano_primitive} implies that any~$d$ of these integers are coprime: the weights are~\emph{well-formed}. See~\cite[\S5]{IF00} for the importance of reduced, well-formed weights in the study of weighted projective space.

\begin{definition}
Let~$P\subset\NQ$ be a~$d$-dimensional Fano simplex with weights~$(\lambda_0,\ldots,\lambda_d)$, and let~$N':=v_0\cdot\Z+\cdots+v_d\cdot\Z$ be the sublattice in~$N$ generated by the vertices of~$P$. The rank-one~$\Q$-factorial toric Fano variety given by the spanning fan of~$P$ is ~$X=\P(\lambda_0,\ldots,\lambda_d)/(N/N')$, where the group~$G:=N/N'$ acts freely in codimension one. We call~$X$ a \emph{fake weighted projective space} of \emph{multiplicity}~$\mX:=|G|$.
\end{definition}

Fake weighted projective spaces have been studied in~\cite{Con02,Buc08,Kas09}, amongst other places. Key to this study is the group~$G$. For example,~\cite{Buc08} showed that~$G$ is equal to the fundamental group~$\pi^1_1(X)$ in codimension one of~$X$. More crudely the multiplicity, given by the index of the sublattice~$N'$ in~$N$, can be used to determine many properties of~$X$. For example, it is an immediate corollary of~\cite[Proposition~2]{BB92} that a fake weighted projective space is a weighted projective space if and only if~$\mX=1$. As a consequence, by restricting the simplex~$P$ to the sublattice~$N'$ one recovers the simplex for~$\P(\lambda_0,\ldots,\lambda_d)$.

\subsection{Terminal and canonical singularities}
Terminal and canonical singularities were introduced by Reid, and play a fundamental role in birational geometry~\cite{Rei80,Rei87}. Terminal singularities form the smallest class of singularities that must be allowed if one wishes to construct minimal models in dimensions three or more. Canonical singularities arise naturally as the singularities occurring on canonical models of varieties of general type, and can be regarded as the limit of terminal singularities. In toric geometry, both terminal and canonical singularities have a particularly elegant combinatorial description.

Throughout let~$M:=\Hom(N,\Z)$ denote the lattice dual to~$N$; that is,~$M$ corresponds to the lattice of characters.  Recall that a toric singularity corresponds to a strictly convex rational polyhedral cone~$\sigma\subset\NQ$~\cite{Dan78}. This cone~$\sigma$ is \emph{terminal} if:
\begin{enumerate}[label=(\alph*), ref=\alph*]
\item\label{item:cone}
the points~$v_1,\ldots,v_k\in N$ corresponding to the primitive generators of the rays of~$\sigma$ are contained in an affine hyperplane~$H_u:=\{v\in\NQ\mid u(v)=1\}$ for some~$u\in\MQ$;
\item
the only points of~$\sigma\cap N$ contained on or under~$H_u$ are the origin and the~$v_i$, i.e.\
\[
\sigma\cap N\cap\{v\in\NQ\mid u(v)\leq 1\}=\{0,v_1,\ldots,v_k\}.
\]
\end{enumerate}
The cone~$\sigma$ is \emph{canonical} if~\eqref{item:cone} holds and:
\begin{enumerate}
\item[(b${}'$)]
the only point of~$\sigma\cap N$ contained strictly under~$H_u$ is the origin, i.e.\
\[
\sigma\cap N\cap\{v\in\NQ\mid u(v)<1\}=\{0\}.
\]
\end{enumerate}

It follows immediately from these descriptions that fake weighted projective spaces with at worst \emph{terminal} singularities correspond to \emph{one-point lattice simplices} (that is,~$P\cap N=\{0,v_0,\ldots,v_d\}$), and that fake weighted projective spaces with at worst \emph{canonical} singularities correspond to lattice simplices containing only the origin as an interior lattice point, which we call \emph{canonical lattice simplices}. By considering the restriction of~$P$ to the sublattice~$N'$ it follows that if~$\P(\lambda_0,\ldots,\lambda_d)/G$ has at worst terminal (respectively, canonical) singularities, then~$\P(\lambda_0,\ldots,\lambda_d)$ has at worst terminal (respectively, canonical) singularities~\cite[Corollary~2.4]{Kas09}. We have the following bound on~$\mX$ in the canonical case:

\begin{corollary}[{\cite[Corollary~2.11]{Kas09}}]\label{cor:kas}
Let~$X$ be a~$d$-dimensional fake weighted projective space with at worst canonical singularities. Let~$(\lambda_0,\ldots,\lambda_d)$ denote the weights of~$X$, ordered such that~$\lambda_d\leq \lambda_i$ for all~$0\leq i<d$. Then
\begin{equation}\label{eq:canonical_bound}
\mX\leq\frac{h^{d-1}}{\lambda_0\cdots\lambda_{d-1}}=\frac{\lambda_d}{h}(-K_Y)^d
\end{equation}
where~$h:=\lambda_0+\cdots+\lambda_d$ and~$Y=\P(\lambda_0,\ldots,\lambda_d)$ is the corresponding weighted projective space.
\end{corollary}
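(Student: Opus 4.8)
The plan is to pass to the canonical lattice simplex $P=\conv\{v_0,\ldots,v_d\}\subset\NQ$ corresponding to $X$, recall that $\mX=[N:N']$ with $N'=\sum_i\Z v_i$, and reduce the asserted bound to a single clean inequality on the anticanonical degree. First I would record a volume–multiplicity dictionary. Decomposing $P$ into the $d+1$ corner simplices $C_i:=\conv(\{0\}\cup\{v_j:j\neq i\})$ meeting at the origin, well-formedness of the weights gives $\Vol_{N'}(C_i)=[N':\sum_{j\neq i}\Z v_j]=\lambda_i$, where $\Vol_{N'}$ denotes normalized volume with respect to $N'$ (a unimodular simplex having volume $1$). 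Summing yields $\Vol_{N'}(P)=\lambda_0+\cdots+\lambda_d=h$, and since passing to the index-$\mX$ overlattice $N$ rescales covolume, $\Vol_N(P)=\mX\,h$. On the other hand $Y=\P(\lambda_0,\ldots,\lambda_d)$ has $(-K_Y)^d=h^d/(\lambda_0\cdots\lambda_d)$, from which the two right-hand sides in the statement are immediately seen to agree.

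Next I would eliminate the multiplicity. Because $G=N/N'$ acts freely in codimension one, the quotient map $Y\to X=Y/G$ is crepant and unramified in codimension one, so $K_Y=\pi^\ast K_X$ and the projection formula gives $(-K_Y)^d=\mX\,(-K_X)^d$. Substituting this, the claimed inequality $\mX\le\frac{\lambda_d}{h}(-K_Y)^d$ becomes equivalent to
\[
(-K_X)^d\ge\frac{h}{\lambda_d},
\]
a bound no longer mentioning $\mX$ and visibly sharp (for $\P^d$ it reads $(d+1)^d\ge d+1$). The hypothesis $\lambda_d=\min_i\lambda_i$ enters here: writing $u_d\in\MQ$ for the vertex of the polar dual $P^\vee$ supporting the facet $F_d=\conv\{v_0,\ldots,v_{d-1}\}$ opposite $v_d$, the relation $\sum_j\lambda_j v_j=0$ gives $\langle u_d,v_d\rangle=1-h/\lambda_i$ at $i=d$, so $h/\lambda_d=1-\langle u_d,v_d\rangle$ is as large as possible exactly for the minimal weight.

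To prove the anticanonical bound I would use the identity $(-K_X)^d=\Vol_M(P^\vee)$ for the normalized volume of the polar dual, together with the primitivity of $v_d\in N=M^\ast$. Viewing $v_d$ as a functional on $M$, the facet $\Phi:=\conv\{u_0,\ldots,u_{d-1}\}$ of $P^\vee$ dual to the vertex $v_d$ lies in $\{\langle\,\cdot\,,v_d\rangle=1\}$, the origin lies at level $0$, and the apex $u_d$ lies at level $\langle u_d,v_d\rangle=1-h/\lambda_d$. Slicing $P^\vee$ along the primitive direction $v_d$ therefore gives $\Vol_M(P^\vee)=(h/\lambda_d)\cdot\Vol(\Phi)$, where $\Vol(\Phi)$ is the normalized volume of $\Phi$ in the induced lattice on its hyperplane. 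Everything thus reduces to the single inequality $\Vol(\Phi)\ge 1$.

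The main obstacle is precisely this last step, and it is where canonicity is indispensable: the inequality fails for non-canonical fake weighted projective spaces, which can have arbitrarily large multiplicity. The condition that $0$ is the only interior lattice point of $P$ is equivalent to each facet cone $\sigma_i:=\operatorname{Cone}(v_j:j\neq i)$ being a canonical cone; applied to $\sigma_d$, this constrains the lattice points of $M$ in the cone over $\Phi$ and should force $\Vol(\Phi)$ to be at least that of a unimodular $(d-1)$-simplex. Concretely I would translate $\Vol(\Phi)\ge 1$ into $\lvert\det_M(u_0,\ldots,u_{d-1})\rvert\ge 1$ and derive it from the canonicity of $\sigma_d$ via the induced lattice on $\{\langle\,\cdot\,,v_d\rangle=1\}$; establishing this determinantal/lattice-index inequality is the crux of the argument.
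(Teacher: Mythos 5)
First, note that the paper does not prove this statement at all: it is quoted verbatim from \cite[Corollary~2.11]{Kas09}, so the only honest comparison is with Kasprzyk's proof, which rests on a genuine lattice-point argument inside $P$ itself. Your reductions up to the final step are correct and cleanly carried out: the identity $\frac{h^{d-1}}{\lambda_0\cdots\lambda_{d-1}}=\frac{\lambda_d}{h}(-K_Y)^d$ follows from $(-K_Y)^d=h^d/(\lambda_0\cdots\lambda_d)$; the \'etale-in-codimension-one quotient gives $(-K_Y)^d=\mX\,(-K_X)^d$ and hence the equivalence of the corollary with $(-K_X)^d\ge h/\lambda_d$; and the pyramid decomposition $\Vol(P^*)=(h/\lambda_d)\Vol(\Phi)$ over the primitive direction $v_d$ is valid, with the minimality of $\lambda_d$ correctly identified as the reason to slice from that vertex.

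The problem is the last step. Since $\Vol(P^*)=(-K_X)^d=h^d/(\mX\,\lambda_0\cdots\lambda_d)$, your quantity $\Vol(\Phi)$ is \emph{identically} equal to $h^{d-1}/(\mX\,\lambda_0\cdots\lambda_{d-1})$, so the inequality $\Vol(\Phi)\ge 1$ is not a reduction of the corollary to something more tractable --- it \emph{is} the corollary, rewritten in dual coordinates. You have relocated the entire content into the step you then leave open. Moreover, the mechanism you propose for closing it is doubtful as stated. Because $X$ need not be Gorenstein, $P^*$ is only a rational simplex, so $\Phi$ has rational (not lattice) vertices and there is no determinant-is-a-nonzero-integer argument available; a rational $(d-1)$-simplex can have arbitrarily small normalized volume. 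And canonicity of the cone $\sigma_d=\operatorname{Cone}(v_0,\ldots,v_{d-1})$ is a statement about lattice points of $N$ lying on or under the hyperplane $H_{u_d}$, not about lattice points of $M$ in the cone over $\Phi$; passing from an ``empty interior'' condition in $N$ to a sharp lower bound on a covolume in $M$ is a transference problem, and generic Minkowski-- or Mahler--type arguments only yield dimension-dependent constants, never the exact constant $1$ needed here. So the crux you flag is a genuine gap, and the route you sketch for it would need a substantially new idea (or a return to the explicit lattice-point counting of \cite{Kas09}) to be completed.
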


\subsection{Gorenstein singularities}\label{subsec:gorenstein}
When the hyperplane~$H_u$ in~\eqref{item:cone} above corresponds to a lattice point~$u\in M$, the singularity is said to be \emph{Gorenstein}. Gorenstein singularities are automatically at worst canonical. In general a Fano polytope~$P\subset\NQ$ corresponds to a toric Fano variety~$X_P$ with Gorenstein singularities if and only if the \emph{dual} polytope~$P^*:=\{u\in\MQ\mid u(v)\ge -1\text{ for all }v\in P\}$ is also a lattice polytope (here the vertices of~$-P^*$ define the hyperplanes~$H_u$). Such polytopes are called \emph{reflexive}.

Gorenstein toric Fano varieties are of particular importance due to Batyrev's mirror symmetry construction~\cite{Bat94,BB96}. In particular, reflexive simplices have been studied extensively; see, for example,~\cite{Con02,Nil07}. The weights of Gorenstein weighted projective space are well-understood~\cite[Corollary~6B.10]{BR86}: a weighted projective space~$\P(\lambda_0,\ldots,\lambda_d)$ is Gorenstein if and only if~$\lambda_i\mid h$ for all~$0\leq i\leq d$, where~$h:=\lambda_0+\cdots+\lambda_d$. Hence the weights can be expressed in terms of unit partitions and are intimately connected with the Sylvester sequence~\cite{Nil07}. Furthermore, weights are preserved when passing to the dual~\cite[Lemma~5.3]{Con02}: if~$P$ is a reflexive simplex with weights~$(\lambda_0,\ldots,\lambda_d)$ then~$P^*$ is also a reflexive simplex with weights~$(\lambda_0,\ldots,\lambda_d)$.

Let~$X=\P(\lambda_0,\ldots,\lambda_d)/G$ be a Gorenstein fake weighted projective space with corresponding reflexive simplex~$P\subset\NQ$. By considering the dual reflexive simplex~$P^*\subset\MQ$ and noting that restricting to a sublattice in~$N$ corresponds to passing to a coarser lattice in~$M$, we conclude that~$\P(\lambda_0,\ldots,\lambda_d)$ is Gorenstein~\cite[Corollary~2.5]{Kas09}. Furthermore, there exists a sharp bound on the multiplicity~$\mX$ in this case:

\begin{corollary}[{\cite[Proposition~5.5]{Con02}}]\label{cor:conrads}
Let~$X$ be a~$d$-dimensional Gorenstein fake weighted projective space with weights~$(\lambda_0,\ldots,\lambda_d)$. Then
\begin{equation}\label{eq:reflexive_multiplicity}
\mX\,\Big|\,\frac{h^{d-1}}{\lambda_0\cdots\lambda_{d-1}\lambda_d}=\frac{1}{h}(-K_Y)^d
\end{equation}
where~$h:=\lambda_0+\cdots+\lambda_d$ and~$Y=\P(\lambda_0,\ldots,\lambda_d)$ is the corresponding weighted projective space.
\end{corollary}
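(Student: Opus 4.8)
The plan is to compare the dual reflexive simplex $P^*\subset\MQ$ in two lattices: the character lattice $M=\Hom(N,\Z)$ of the fake weighted projective space $X$, and the finer lattice $M':=\Hom(N',\Z)$ of the underlying weighted projective space $Y=\P(\lambda_0,\ldots,\lambda_d)$. Since $N'\subseteq N$ with index $\mX$, restriction of functionals gives $M\subseteq M'$ with the same index $[M':M]=\mX$, and $P^*$ is simultaneously a lattice simplex with respect to $M$ (because $X$ is Gorenstein) and to $M'$ (because $Y$ is Gorenstein). As $M'$ is the finer lattice, the normalized volumes satisfy $\Vol_{M'}(P^*)=\mX\cdot\Vol_M(P^*)$. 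Because $Y$ is a Gorenstein weighted projective space, the left-hand side is the anticanonical degree $\Vol_{M'}(P^*)=(-K_Y)^d=h^d/(\lambda_0\cdots\lambda_d)$. Thus it suffices to prove that $h$ divides $\Vol_M(P^*)$: writing $\Vol_M(P^*)=c\,h$ with $c\in\Z_{>0}$, the volume relation becomes $h^d/(\lambda_0\cdots\lambda_d)=\mX\,c\,h$, whence $h^{d-1}/(\lambda_0\cdots\lambda_d)=\mX\,c$ and the divisibility follows.

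The heart of the argument is therefore to show that $h\mid\Vol_M(P^*)$. Here I would use the fact, cited above from Conrads, that the weights are preserved under duality: $P^*=\conv\{u_0,\ldots,u_d\}$ is a reflexive simplex whose reduced weights are again $(\lambda_0,\ldots,\lambda_d)$, with $\sum_i\lambda_i u_i=0$ and $\sum_i\lambda_i=h$. It is precisely this coincidence of weight-sums that produces the extra factor of $h$ beyond the crude bound $\mX\mid h^d/(\lambda_0\cdots\lambda_d)$, the latter coming merely from $\Vol_M(P^*)\in\Z$.

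To exploit this I would subdivide $P^*$ by coning the origin over its facets, obtaining the $d+1$ lattice simplices $P^*_i:=\conv\{0,u_0,\ldots,\widehat{u_i},\ldots,u_d\}$. Since $0=\sum_i(\lambda_i/h)\,u_i$ is the unique barycentric expression of the origin, the $i$-th barycentric coordinate $\lambda_i/h$ equals the ratio $\Vol_M(P^*_i)/\Vol_M(P^*)$; hence $\Vol_M(P^*_i)=\lambda_i\,c$ for the common value $c:=\Vol_M(P^*)/h$. Each $\Vol_M(P^*_i)$ is a positive integer because $P^*_i$ is a lattice simplex, and because the weights are reduced there exist integers $a_i$ with $\sum_i a_i\lambda_i=1$; then $c=\sum_i a_i(\lambda_i c)=\sum_i a_i\Vol_M(P^*_i)\in\Z$. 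Summing the pieces gives $\Vol_M(P^*)=\sum_i\Vol_M(P^*_i)=c\sum_i\lambda_i=c\,h$, so $h\mid\Vol_M(P^*)$, as required.

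The main obstacle is the conceptual step of passing to the dual: the multiplicity $\mX$ is defined through the primal lattices $N'\subseteq N$, but the sharp power $h^{d-1}$ only emerges on the dual side, where one can simultaneously exploit the Gorenstein (lattice-polytope) property of $P^*$ in $M$ and the weight-preservation under duality. Once the computation is organised around $\Vol_{M'}(P^*)=\mX\cdot\Vol_M(P^*)$ together with the integrality established above, the remaining steps are routine; the only points requiring care are the correct direction of the volume scaling under $M\subseteq M'$ and the identification of $\Vol_{M'}(P^*)$ with the anticanonical degree $(-K_Y)^d=h^d/(\lambda_0\cdots\lambda_d)$ of the Gorenstein weighted projective space $Y$.
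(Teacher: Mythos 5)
Your argument is correct, and it is worth noting that the paper itself offers no proof of this statement: it is imported verbatim from Conrads~\cite[Proposition~5.5]{Con02}, so the only comparison available is with that source. Your derivation is a clean, self-contained alternative that uses exactly the facts the paper already quotes in \S\ref{subsec:gorenstein}: that $Y$ is Gorenstein whenever $X$ is (so $P^*$ is a lattice simplex for both $M$ and $M'$), that duality preserves the weight system, and the degree formula $(-K_Y)^d=h^d/(\lambda_0\cdots\lambda_d)$. The two pivots of your proof check out: since $N'\subseteq N$ has index $\mX$, the dual inclusion $M\subseteq M'$ has the same index and the normalized volumes scale the right way, giving $(-K_Y)^d=\Vol_{M'}(P^*)=\mX\cdot\Vol_M(P^*)$; and the stellar subdivision of $P^*$ at the origin gives $\Vol_M(P^*_i)=\lambda_i\,\Vol_M(P^*)/h\in\Z_{>0}$, so coprimality of the reduced weights forces $c:=\Vol_M(P^*)/h\in\Z$, i.e.\ $h\mid(-K_X)^d$. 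Combining the two yields $\mX\cdot c=h^{d-1}/(\lambda_0\cdots\lambda_d)$ as required. Conrads' own route is organised differently: he classifies the reflexive simplices with a fixed weight system by the intermediate lattices between the vertex lattice and a canonically defined maximal lattice, and reads off the divisibility from the index of that maximal overlattice; your approach trades that structural classification for a direct volume computation on the dual side, which is shorter but gives less information about \emph{which} divisors of $h^{d-1}/(\lambda_0\cdots\lambda_d)$ actually occur. The only point deserving an explicit word in a write-up is that the weight relation $\sum_i\lambda_i u_i=0$ on the dual side is a statement about the rational points $u_i$ alone and is insensitive to whether $P^*$ is viewed in $M$ or $M'$, so no indexing or lattice ambiguity affects the computation of $c$.
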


The right-hand side in~\eqref{eq:reflexive_multiplicity} warrants a little explanation. Let~$S$ be the reflexive simplex corresponding to~$Y$ and define~$X_{S^*}$ to be the Gorenstein fake weighted projective space corresponding to the dual reflexive simplex~$S^*$ (where we exchange the roles of the lattices~$N$ and~$M$). The multiplicity of~$X_{S^*}$ can be easily computed as a ratio of volumes:
\[
\mult{X_{S^*}}=\frac{\Vol{S^*}}{\Vol{S}}=\frac{(-K_Y)^d}{h}.
\]
This is the quantity appearing on the right-hand side of~\eqref{eq:reflexive_multiplicity}. By \cite[Proposition~4.4]{Nil07}, equality in Corollary~\ref{cor:conrads} holds if and only if $X \cong X_{S^*}$.

Notice that the weighted projective spaces occurring in Theorem~\ref{main} are Gorenstein. As will be described in more detail below, equality is attained by the Gorenstein fake weighted projective spaces associated to the dual reflexive simplices $S^*$. In some sense the fact that reflexive simplices maximise the multiplicity should not be too surprising. This fits into the broader picture hinted by the following two theorems where, once again, the occurring weighted projective spaces are Gorenstein.

\begin{theorem}[{\cite[Theorem~3.6]{Kas10}}]\label{thm:vol_dim_3}
Let~$X$ be a~$3$-dimensional toric Fano variety with at worst canonical singularities. Then $(-K_X)^3\leq 72$ with equality if and only if~$X\cong\P(3,1,1,1)$ or $\P(6,4,1,1)$.
\end{theorem}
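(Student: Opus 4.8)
The plan is to convert the statement into a question about the volume of dual polytopes. Let $P\subset\NQ$ be the Fano polytope whose spanning fan defines $X$; the hypothesis that $X$ has at worst canonical singularities means precisely that the origin is the only interior lattice point of $P$. Since $P^*\subset\MQ$ is the polytope of global sections of $-K_X$, the standard toric identity $D^{d}=d!\,\Vol(D\text{-polytope})$ gives $(-K_X)^3=6\,\Vol(P^*)$, where $\Vol$ denotes Euclidean volume normalised to the lattice $M$. The theorem is therefore equivalent to the sharp bound $\Vol(P^*)\le 12$ over all canonical Fano polytopes $P$, and I would organise the proof around locating the $P$ of largest dual volume.

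First I would treat the case where $P$ is a simplex, that is, where $X$ is a fake weighted projective space. Passing to the sublattice $N'$ generated by the vertices of $P$ replaces $X$ by the genuine weighted projective space $Y=\P(\lambda_0,\ldots,\lambda_3)$ and scales the degree by the multiplicity, $(-K_X)^3=(-K_Y)^3/\mX$; hence within this case the degree is largest exactly when $\mX=1$. The problem then reduces to the arithmetic optimisation of $(-K_Y)^3=h^3/(\lambda_0\lambda_1\lambda_2\lambda_3)$, with $h=\lambda_0+\cdots+\lambda_3$, over the well-formed weight systems for which the fundamental simplex of $Y$ has the origin as its only interior lattice point. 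Feeding in this combinatorial characterisation of canonicity (a finite system of fractional-part conditions on the $\lambda_i$) one checks that the optimum is attained by Gorenstein, i.e.\ reflexive, systems, where the condition $\lambda_i\mid h$ turns the optimisation into one over unit-fraction partitions $1=\sum_i 1/a_i$ with $a_i=h/\lambda_i$. A short finite check over the length-four partitions then produces the maximum $72$, realised precisely by $\P(3,1,1,1)$ and $\P(6,4,1,1)$, which correspond to $1=1/2+1/6+1/6+1/6$ and $1=1/2+1/3+1/12+1/12$.

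For arbitrary canonical $P$ I would obtain the global bound $\Vol(P^*)\le 12$ from the finiteness and explicit enumerability of three-dimensional canonical Fano polytopes: each such $P$ is generated from a minimal canonical simplex by a controlled growing procedure that creates no new interior lattice points, so the family is finite and the supremum of $\Vol(P^*)$ can be located along it. Once this analysis shows the maximiser to be simplicial, the multiplicity scaling forces $\mX=1$, and the arithmetic of the previous paragraph isolates $\P(3,1,1,1)$ and $\P(6,4,1,1)$ as the only cases of equality.

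The main obstacle is precisely this last step. In contrast to the simplicial case, one cannot in general reduce a canonical $P$ to an inscribed canonical simplex — for the cross-polytope, for instance, no sub-simplex on the vertices contains the origin in its interior — so the strict monotonicity of dual volume under proper inclusions $Q\subset P$ does not by itself confine the search to weighted projective spaces. Bounding the non-simplicial polytopes, and thereby confirming that none of them beats the reflexive simplices $\P(3,1,1,1)$ and $\P(6,4,1,1)$, is where the real work lies and is what genuinely relies on the classification rather than on a soft convexity argument.
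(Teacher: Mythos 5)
The paper does not actually prove this statement: it is imported verbatim from \cite[Theorem~3.6]{Kas10}, where it is obtained as a by-product of the exhaustive computer classification of all three-dimensional canonical Fano polytopes. So there is no internal proof to measure you against; the relevant comparison is with the cited source, and your sketch is in fact a reasonable account of \emph{why} that classification settles the question. Your reductions in the simplicial case are sound: the identity $(-K_X)^3=6\Vol(P^*)$ (Euclidean volume), the scaling $(-K_X)^3=(-K_Y)^3/\mult X$ under passage to the vertex sublattice, the conclusion that equality forces $\mult X=1$, and the unit-partition computation exhibiting $72$ for $\P(3,1,1,1)$ and $\P(6,4,1,1)$ are all correct, as is your observation that the octahedron blocks any soft reduction of the general case to inscribed simplices.

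That said, as a standalone proof the proposal has two unfilled gaps, and you have located only one of them. The one you flag --- that non-simplicial canonical polytopes must be handled by the classification --- is real. But there is a second gap inside your simplicial argument: the assertion that among \emph{canonical} well-formed weight systems the degree $h^3/(\lambda_0\lambda_1\lambda_2\lambda_3)$ is maximised by \emph{Gorenstein} ones is not a ``short check.'' A priori there are infinitely many weight systems; canonicity cuts this down to the $104$ systems of \cite[Table~3]{Kas10}, but establishing that finite list (or otherwise bounding the non-Gorenstein systems, e.g.\ ruling out things like $(4,1,1,1)$ with $h^3/\prod\lambda_i=343/4>72$, which fails Reid--Tai) is itself nontrivial and is again supplied by the classification. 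So both halves of your argument ultimately defer to \cite{Kas10}; this matches how the result is actually established, but it means the proposal is a framing of the citation rather than an independent proof.
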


\begin{theorem}[{\cite[Corollary~1.3]{BKN16}}]\label{reflexive-vol-thm}
Let~$X$ be a~$d$-dimensional toric Fano variety with at worst canonical singularities, where~$d\geq 4$. Then $(-K_X)^d\leq 2(s_d-1)^2$ with equality if and only if~$X\cong\P\left(\frac{2(s_d-1)}{s_1},\ldots,\frac{2(s_d-1)}{s_{d-1}},1,1\right)$.
\end{theorem}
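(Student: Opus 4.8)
The plan is to translate the statement into a volume inequality for lattice polytopes and then to solve an arithmetic extremal problem. Writing $X=X_P$ for the canonical Fano polytope $P\subset\NQ$ (a lattice polytope whose only interior lattice point is the origin), the anticanonical degree is the normalised volume of the polar dual, $(-K_X)^d=\Vol(P^*)$. So the theorem asserts that, over all such $P$, the quantity $\Vol(P^*)$ is maximised by the dual of the reflexive simplex associated with the stated weighted projective space. First I would reduce to the case that $P$ is a reflexive simplex; the arithmetic heart then decouples cleanly from the geometry.

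Here the tempting shortcut --- shrinking $P$ to a sub-simplex on its own vertices --- is unavailable, since e.g.\ the cross-polytope contains no canonical Fano sub-simplex at all. Instead I would argue the sharp bound directly at the level of volumes: among $d$-dimensional lattice polytopes with a single interior lattice point, the volume of the dual is maximised by a \emph{reflexive simplex}. This is precisely the dual form of Hensley's problem on the largest lattice simplex with one interior point, and proving it --- simultaneously forcing the maximiser to be (i) a simplex, (ii) Gorenstein, and (iii) of multiplicity one, i.e.\ an honest weighted projective space --- is where essentially all the difficulty lies. I would attempt this by induction on~$d$, peeling off the smallest weights and comparing the resulting $(d-1)$-dimensional configuration, in the spirit of the extremal weight-system analysis underlying Corollary~\ref{cor:conrads}.

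Once the problem is reduced to a reflexive simplex, weight-preservation under duality (\S\ref{subsec:gorenstein}) lets me work directly with the weights $(\lambda_0,\ldots,\lambda_d)$: one has $(-K_X)^d=h^d/(\lambda_0\cdots\lambda_d)$ with $h=\lambda_0+\cdots+\lambda_d$, and the Gorenstein condition $\lambda_i\mid h$ converts the identity $\sum_i\lambda_i/h=1$ into a genuine unit partition $\sum_i 1/m_i=1$ with integers $m_i:=h/\lambda_i$ and $(-K_X)^d=(\prod_i m_i)/h$. Maximising this over unit partitions into $d+1$ parts is a classical greedy problem solved by the Sylvester recursion; for $d\ge 4$ the optimum is the partition $\{s_1,\ldots,s_{d-1},2(s_d-1),2(s_d-1)\}$. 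The telescoping identity $\sum_{i=1}^{d-1}1/s_i=1-1/(s_d-1)$ then yields $h=2(s_d-1)$, $\prod_i\lambda_i=2^{d-1}(s_d-1)^{d-2}$, and hence $(-K_X)^d=2(s_d-1)^2$, matching the claimed extremiser.

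I expect the principal obstacle to be the reduction of the second paragraph: ruling out that a non-simplicial $P$, a non-Gorenstein simplex, or a fake (multiplicity~$>1$) quotient could exceed the reflexive weighted projective space. This is delicate precisely because the extremal shape genuinely changes with the dimension (in dimension three the maximisers are $\P(3,1,1,1)$ and $\P(6,4,1,1)$ by Theorem~\ref{thm:vol_dim_3}, not a Sylvester configuration), so the inductive estimates must be sharp enough to detect the transition to the $d\ge 4$ regime. Granting the reduction, uniqueness of the equality case follows by tracing when every inequality is tight: this forces two weights equal to~$1$, pins the remaining $d-1$ weights to the ratios $2(s_d-1)/s_i$, and excludes any proper sublattice, determining $X$ uniquely up to $\GL(N)$-equivalence.
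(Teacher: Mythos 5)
This statement is quoted in the paper from \cite[Corollary~1.3]{BKN16}; the paper gives no proof of it, so the only question is whether your proposal would constitute one. It would not, and you have in fact located the gap yourself. The assertion in your second paragraph --- that among all $d$-dimensional canonical Fano polytopes the dual volume $\Vol(P^*)=(-K_{X_P})^d$ is maximised by a reflexive simplex of multiplicity one --- is not a reduction step but is essentially the theorem itself restated (it is the content of Corollaries~1.2 and~1.3 of \cite{BKN16}), and invoking it as a stepping stone is circular. You correctly observe that the naive reduction (passing to a sub-simplex on vertices of $P$) fails, e.g.\ for the cross-polytope, but the replacement you offer (``induction on $d$, peeling off the smallest weights'') is a plan, not an argument: it is not explained what the inductive statement is, how a non-simplicial $P$ or a non-Gorenstein simplex enters the induction, or why the estimates close up precisely at $d=4$ given that the extremal configuration is different at $d=3$ (Theorem~\ref{thm:vol_dim_3}). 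Since you yourself write that this step is ``where essentially all the difficulty lies,'' the proposal proves only the easy half.

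What you do carry out correctly is the terminal arithmetic: for a Gorenstein weighted projective space the substitution $m_i=h/\lambda_i$ turns the weight identity into a unit partition $\sum_i 1/m_i=1$ with $(-K_Y)^d=(\prod_i m_i)/h$, the extremal partition for $d\ge 4$ is $\{s_1,\ldots,s_{d-1},2(s_d-1),2(s_d-1)\}$, and the telescoping identity $\sum_{i=1}^{d-1}1/s_i=1-1/(s_d-1)$ gives $h=2(s_d-1)$ and the value $2(s_d-1)^2$. (Even here one should be slightly careful: the quantity being maximised is $(\prod_i m_i)/h$ with $h$ the associated weight sum, not the bare product $\prod_i m_i$ of the classical Sylvester/Curtiss problem, so the optimality of this partition among unit partitions of length $d+1$ needs its own argument --- this is supplied by \cite{Nil07}, not by the greedy heuristic.) In short: the skeleton and the extremal computation match what is known, but the geometric reduction that constitutes the substance of \cite{BKN16} is missing.
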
 

Motivated by the study of canonical models for Fano threefold with terminal $\Q$-factorial singularities and Picard number one,~\cite{Prok05} proves a result whose statement is very similar to Theorem~\ref{thm:vol_dim_3}: the requirement that $X$ is toric is removed, and replaced with the requirement that $X$ is Gorenstein.

\subsection{Two interesting examples}
One might expect that the denominator in Corollary~\ref{cor:kas} should include~$\lambda_d$, imitating  the Gorenstein case given in Corollary~\ref{cor:conrads}. Surprisingly this does \emph{not} give an upper bound on the multiplicity when one allows non-Gorenstein canonical singularities:

\begin{example}
Let~$X$ be the canonical fake weighted projective space~$\P(15,10,3,2)/(\Z/2\Z)$ of multiplicity two, with corresponding simplex
\[
P=\conv\{0,2e_1,3e_2,10e_3\}-(1,1,1).
\]
This is number~$547\,392$ in~\cite{GRDB}. Note that the bound~\eqref{eq:canonical_bound} is sharp, whilst~$h^2/\lambda_0\cdots\lambda_3=1$. In fact~$\P(15,10,3,2)$ is Gorenstein; the corresponding reflexive simplex~$S$ is self-dual, i.e.~$S^*\cong S$.
\end{example}

It might also be expected that for a reflexive simplex~$S$ containing a one amongst the weights, the dual simplex~$S^*$ uniquely maximises the multiplicity. Again, this is \emph{not} the case:

\begin{example}
Let~$X$ be a canonical fake weighted projective space~$\P(2,2,1,1)/G$ of maximum multiplicity. By Corollary~\ref{cor:kas} we have that~$|G|\leq 9$ and, since~$Y=\P(2,2,1,1)$ is Gorenstein, by Corollary~\ref{cor:conrads} we see that this bound is achieved by~$X=X_{S^*}$, where~$S^*$ is the reflexive simplex dual to the simplex~$S$ associated with~$Y$. That is, by
\[
S^*=\conv\{0,3e_1,3e_2,6e_3\}-(1,1,1).
\]
This is number~$547\,396$ in~\cite{GRDB}. There is a second possibility,~$X=X_P$, given by
\[
P=\conv\{(1,0,0),(0,1,0),(2,5,9),(-4,-7,-9)\}.
\]
This is number~$547\,409$ in~\cite{GRDB}. Since~$P$ is not reflexive, so~$X_P\not\cong X_{S^*}$.
\end{example}

\subsection{The cases of equality}
For~$d \le 4$, the equality cases in Theorem~\ref{main} are unique up to isomorphism. This result is a consequence of the existing classifications in low dimension. More precisely, when~$d=2$ every canonical Fano polytope is reflexive. It is a simple matter to list the possible weights and, by Corollary~\ref{cor:conrads}, compute the maximum multiplicity in each case. The results are summarised in Table~\ref{tab:dim2}. The maximum multiplicity is three, uniquely achieved by the simplex~$S^*$ dual to~$\P^2$.

\begin{table}[tb]
\centering
\begin{tabular}{rccc}
\toprule
Weights&$(1,1,1)$&$(2,1,1)$&$(3,2,1)$\\
$\mult{Y_{S^*}}$&$3$&$2$&$1$\\
\bottomrule
\end{tabular}\vspace{0.5em}
\caption{The possible weights and maximum multiplicities when~$d=2$.}\label{tab:dim2}
\end{table}

When~$d=3$ it is no longer true that every canonical simplex is reflexive. From the classification in~\cite[Table~3]{Kas10} we see that there are~$104$ distinct weights. By Corollary~\ref{cor:kas} we find that a canonical fake weighted projective space~$X$ satisfies~$\mX\leq 16$, and if~$\mX=16$ then~$X$ has weights~$(1,1,1,1)$. Corollary~\ref{cor:conrads} tells us this multiplicity is achieved by the reflexive simplex dual to~$\P^3$, and uniqueness comes from inspecting the classification.

When~$d=4$ the canonical weights are classified in~\cite[Theorem~3.6]{Kas13}. There are~$338\,752$ possibilities. Again, by Corollary~\ref{cor:kas} we find that a canonical fake weighted projective space~$X$ satisfies~$\mX\leq 128$, and if~$\mX=128$ then~$X$ has weights~$(4,1,1,1,1)$. By Corollary~\ref{cor:conrads} we see that this maximum multiplicity is obtained by~$S^*$, where~$S$ is the reflexive simplex associated with~$\P(4,1,1,1,1)$. To prove uniqueness we simply work through the possible Hermite normal forms~$H$ with~$\det{H}=128$, computing~$P=H(S)$ in each case. (In practise one makes use of the action of the automorphism group~$S_4$ on~$S$ to keep the number of Hermite forms that need considering manageable.) We find that~$P$ is a canonical Fano simplex if and only if~$P\cong S^*$.

Combining these results gives:

\begin{proposition}\label{main2}
Let~$P\subset\NQ$ be a~$d$-dimensional canonical lattice simplex. 
\begin{enumerate}
\item If~$d\le 3$ then
\[
\mP \le (d+1)^{d-1}
\]
with equality if and only if~$P\cong\conv\{0,(d+1) e_1, \ldots, (d+1)e_d\}-(1,\ldots, 1)$.
\item If~$d=4$ then
\[
\mP \le 128
\]
with equality if and only if~$P \cong\conv\{0,2e_1,8e_2,8e_3,8e_4\}-(1,1,1,1)$. 
\end{enumerate}
\end{proposition}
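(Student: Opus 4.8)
The plan is to reduce the statement to a finite case analysis in each dimension, exploiting the fact that canonical lattice simplices admit only finitely many weight systems for $d \le 4$, together with the two sharp bounds already in hand: the general canonical bound of Corollary~\ref{cor:kas} and the Gorenstein equality criterion of Corollary~\ref{cor:conrads}. The architecture is identical in every dimension. First I would enumerate the admissible weights; then use Corollary~\ref{cor:kas} to bound $\mP$ and to isolate the unique weight system capable of attaining the claimed maximum; and finally, observing that these extremal weights are Gorenstein, invoke Corollary~\ref{cor:conrads} to exhibit the maximiser as the fake weighted projective space $X_{S^*}$ associated with the dual reflexive simplex $S^*$. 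Throughout I identify $\mP$ with $\mult X_P$, so that the proposition is simply the translation of these multiplicity bounds into the language of lattice simplices.

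For $d = 2$ every canonical Fano polygon is reflexive, so the possible weights are exactly $(1,1,1)$, $(2,1,1)$, $(3,2,1)$; Corollary~\ref{cor:conrads} then returns the maximal multiplicities $3, 2, 1$ recorded in Table~\ref{tab:dim2}, and the value $3 = (d+1)^{d-1}$ is attained only for weights $(1,1,1)$, i.e. by the simplex dual to $\P^2$. For $d = 3$ I would take the $104$ weight systems from \cite[Table~3]{Kas10}; feeding each into Corollary~\ref{cor:kas} shows $\mP \le 16 = (d+1)^{d-1}$ and forces the weights $(1,1,1,1)$ in the equality case. As these weights are Gorenstein, Corollary~\ref{cor:conrads} applies: the value $16$ is realised precisely by $X_{S^*}$ with $S$ the reflexive simplex of $\P^3$, and uniqueness is read directly off the classification. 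In both cases the extremal simplex is the claimed $\conv\{0, (d+1)e_1, \ldots, (d+1)e_d\} - (1, \ldots, 1)$, namely the reflexive simplex dual to $\P^d$.

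For $d = 4$ the same pattern applies, now beginning with the $338\,752$ canonical weight systems of \cite[Theorem~3.6]{Kas13}. Corollary~\ref{cor:kas} yields $\mP \le 128$ and pins the extremal weights to $(4,1,1,1,1)$, which are again Gorenstein; Corollary~\ref{cor:conrads} then identifies $X_{S^*}$, with $S$ the reflexive simplex of $\P(4,1,1,1,1)$, as a simplex attaining $\mP = 128$, and a direct computation shows this to be the stated $\conv\{0, 2e_1, 8e_2, 8e_3, 8e_4\} - (1,1,1,1)$. The main obstacle is \emph{uniqueness} in this dimension: unlike the Gorenstein case, a fixed weight system can support several distinct fake structures, so one must rule out every sublattice of index $128$ apart from the one producing $S^*$. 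I would settle this by enumerating the Hermite normal forms $H$ with $\det H = 128$, forming $P = H(S)$, and testing whether $P$ is a canonical Fano simplex, quotienting by the $S_4$-symmetry of $S$ to keep the enumeration tractable. The expected conclusion — confirmed by the classification — is that $P$ is canonical exactly when $P \cong S^*$, which completes the proof.
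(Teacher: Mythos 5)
Your proposal is correct and follows essentially the same route as the paper: enumerate the admissible weight systems from the existing classifications, apply Corollary~\ref{cor:kas} to bound the multiplicity and isolate the extremal weights, use Corollary~\ref{cor:conrads} to realise the bound via the dual reflexive simplex, and settle uniqueness by inspecting the classification (for $d\le 3$) or by a Hermite normal form enumeration modulo the $S_4$-action (for $d=4$). No substantive differences to report.
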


We prove uniqueness when~$d\ge5$ in~\S\ref{sec:proof_of_results}.

\begin{theorem}\label{conj}
Let~$P\subset\NQ$ be a~$d$-dimensional canonical lattice simplex, where~$d\ge 5$. Then
\[
\mP \le 3 (s_{d-1}-1)^2
\]
with equality if and only if 
\[
P\cong\conv\{0,s_1 e_1,\ldots,s_{d-2} e_{d-2},3 (s_{d-1}-1) e_{d-1},3 (s_{d-1}-1) e_d\}-(1,\ldots,1).
\]
\end{theorem}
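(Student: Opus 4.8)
The plan is to prove Theorem~\ref{conj} in two stages: first bound the multiplicity purely in terms of the weights, and only then pin down the extremal simplex using the Gorenstein machinery of \S\ref{subsec:gorenstein}. Let $P$ be a $d$-dimensional canonical lattice simplex with weights $(\lambda_0,\ldots,\lambda_d)$, ordered so that $\lambda_d=\min_i\lambda_i$, and set $h=\lambda_0+\cdots+\lambda_d$. Since $P$ canonical forces $Y=\P(\lambda_0,\ldots,\lambda_d)$ to be canonical, Corollary~\ref{cor:kas} gives
\[
\mP \le \frac{h^{d-1}}{\lambda_0\cdots\lambda_{d-1}}=\frac{\lambda_d}{h}(-K_Y)^d .
\]
It therefore suffices to establish the combinatorial claim that, over all canonical weight systems in dimension $d$, one has $h^{d-1}/(\lambda_0\cdots\lambda_{d-1})\le 3(s_{d-1}-1)^2$, with equality precisely for the weights $\left(\tfrac{3(s_{d-1}-1)}{s_1},\ldots,\tfrac{3(s_{d-1}-1)}{s_{d-2}},1,1,1\right)$ up to order. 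I stress that this quantity is \emph{not} the volume $(-K_Y)^d$ bounded in Theorem~\ref{reflexive-vol-thm}: a direct computation shows the volume-extremal weights of \cite{BKN16} give only $\tfrac{\lambda_d}{h}(-K_Y)^d=s_d-1=s_{d-1}(s_{d-1}-1)$, which is strictly smaller than $3(s_{d-1}-1)^2$ for $d\ge 5$, so the optimum for our problem is attained by a genuinely different, less spread-out weight system and Theorem~\ref{reflexive-vol-thm} cannot be invoked as a black box.

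The core step is the weight optimisation, which I expect to be the principal obstacle. Writing $q_i:=\lambda_i/h$ for the barycentric coordinates of the origin, we have $\sum_i q_i=1$ and $h^{d-1}/(\lambda_0\cdots\lambda_{d-1})=1/(h\,q_0\cdots q_{d-1})$, so the claim is equivalent to minimising $h\,q_0\cdots q_{d-1}$ subject to $\lambda_i\in\Z_{>0}$ and the canonical constraint. The main work is to encode canonicity — the origin being the \emph{unique} interior lattice point of the simplex associated with $(\lambda_0,\ldots,\lambda_d)$ — as a unit-fraction inequality on the reciprocals of the dilation factors, and then to run a Sylvester-sequence greedy argument: the extremal configuration should be forced to consist of exactly three unit weights together with a tail $a_j=s_j$, governed by the identity $\sum_{j=1}^{k}1/s_j=1-1/(s_{k+1}-1)$, which reproduces the stated weights and the value $3(s_{d-1}-1)^2$. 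The difficulty is twofold: putting the canonicity condition into a sharp form amenable to optimisation, and — harder still — proving \emph{uniqueness} by excluding the sporadic near-extremal weight systems that typically proliferate in such Sylvester-type maximisations (as in the analysis underlying Theorem~\ref{reflexive-vol-thm}).

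Finally I would pass from the extremal weights to the extremal simplex. Suppose $\mP=3(s_{d-1}-1)^2$. Combining the reduction above with the combinatorial claim, both inequalities must be equalities, so the weight system is forced to be the extremal one. These weights satisfy $\lambda_i\mid h$ (indeed $h=3(s_{d-1}-1)$), so $Y$ is Gorenstein and the associated simplex $S$ is reflexive; moreover a short computation gives $h^{d-1}/(\lambda_0\cdots\lambda_d)=3(s_{d-1}-1)^2$ as well. Hence by Corollary~\ref{cor:conrads} we have $\mP\mid 3(s_{d-1}-1)^2$ with the value $3(s_{d-1}-1)^2$ realising the full quotient, so equality holds in Corollary~\ref{cor:conrads} and the criterion of \cite{Nil07} identifies the maximiser uniquely as $P\cong S^{*}$. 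It then remains to verify that $S^{*}$ is exactly the simplex in the statement: its weights coincide with those of $S$ because weights are preserved under duality \cite{Con02}, and its multiplicity equals $\prod_i a_i/\operatorname{lcm}(a_i)$, where $a_1,\ldots,a_d$ are the dilation factors $s_1,\ldots,s_{d-2},3(s_{d-1}-1),3(s_{d-1}-1)$; using $s_{d-1}-1=s_1\cdots s_{d-2}$ this evaluates to $3(s_{d-1}-1)^2$, confirming the identification and completing the proof.
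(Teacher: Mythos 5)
Your opening reduction (Lemma-bound via Corollary~\ref{cor:kas} and passage to barycentric coordinates $q_i=\lambda_i/h$) matches the paper, and your observation that Theorem~\ref{reflexive-vol-thm} cannot be invoked as a black box is correct. But the proposal has two genuine gaps, one of omission and one of logic.

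First, the weight optimisation — which you yourself flag as ``the principal obstacle'' — is never carried out. The paper does this by citing \cite[Theorem~1.1]{Ave12}, which encodes canonicity as the system of inequalities $x_1\cdots x_j\le x_{j+1}+\cdots+x_n$ defining the set $\Chi^n$, then invoking \cite[Lemma~4.2(b)]{AKN15} to show every minimiser of $x_1\cdots x_{n-1}x_n^{-1}$ over $\Chi^n$ has the Sylvester form $\bigl(\tfrac{1}{s_1},\ldots,\tfrac{1}{s_{l-1}},\tfrac{1}{(n-l+1)(s_l-1)},\ldots\bigr)$, and finally proving by induction (Lemma~\ref{lemma1}) that among the resulting candidate values $(r+1)^{r-1}(s_{n-r}-1)^r$ the maximum is attained uniquely at $r=2$. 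Your sketch anticipates the answer but supplies none of this; in particular the comparison of the finitely many Sylvester-type candidates is a real argument, not a routine greedy step.

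Second, and more seriously, your uniqueness argument is unsound. Corollary~\ref{cor:conrads} applies to a \emph{Gorenstein fake weighted projective space} $X$, i.e.\ it requires $P$ itself to be reflexive; you only establish that the quotiented space $Y$ is Gorenstein, and the implication runs the wrong way ($X$ Gorenstein implies $Y$ Gorenstein, not conversely). The paper's own second example is a direct counterexample to your logic: for $Y=\P(2,2,1,1)$ the maximal multiplicity $9$ is attained both by the reflexive simplex $S^*$ and by a non-reflexive canonical simplex, so ``extremal weights $+$ $Y$ Gorenstein $+$ maximal multiplicity'' does \emph{not} force $P\cong S^*$. This is precisely why the paper needs the entire second half of its proof: Lemma~\ref{lemma2} reduces $P$ to the form $\conv\{s_1e_1,\ldots,s_{d-2}e_{d-2},a,b,c\}$ with $a+b+c=0$ via a divisibility argument on the barycentric relation; Lemma~\ref{lemma3} projects to the last two coordinates and uses a half-open-cube lattice-point argument to show the scaled triangle $T'$ is isomorphic to $3\Delta_2$; a further lemma lifts this to $T\cong 3\Delta_2$; and only then is an affine lattice basis exhibited that pins down $P$ up to isomorphism. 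None of this geometric work can be replaced by an appeal to Corollary~\ref{cor:conrads}, so your final stage needs to be replaced entirely.
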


Let us notice again that the reflexive simplices described in Proposition~\ref{main2} and Theorem~\ref{conj} are precisely the dual reflexive simplices~$S^*$ described in~\S\ref{subsec:gorenstein} and~\cite[Proposition~4.4]{Nil07}.

\subsection{A remark on volume maximisers}

Let~$Y=\P(\lambda_0,\ldots,\lambda_d)$ have at worst canonical singularities. Is there a sharp upper bound for~$h:=\lambda_0+\cdots+\lambda_d$ (or, equivalently, $\Vol{P}$, where $P\subset\NQ$ is the associated canonical lattice simplex)? This is an open question; the best we can offer is a consequence of~\cite[Table~3]{Kas10} and~\cite[Theorem~3.6]{Kas13}:

\begin{enumerate}
\item\label{item:canonical_h_3}
if $d=3$ then $h\leq 66$ with equality if and only if $Y\cong\P(33,22,6,5)$;
\item\label{item:canonical_h_4}
if $d=4$ then $h\leq 3486$ with equality if and only if $Y\cong\P(1743,1162,498,42,41)$.
\end{enumerate}

It is also natural to ask which polytopes maximise the volume amongst all $d$-dimensional canonical lattice polytopes $P\subset\NQ$. The existing classifications in dimensions two and three~\cite{Rab89,Kas10} give:

\begin{enumerate}
\item
if $d=2$ then $\Vol{P}\leq 9$ with equality if and only if $P\cong\conv\{0,3e_1,3e_2\}-(1,1)$;
\item
if $d=3$ then $\Vol{P}\leq 72$ with equality if and only if
\[
P\cong\conv\{0,2e_1,6e_2,6e_3\}-(1,1,1)\quad\text{ or }\quad\conv\{0,2e_1,3e_2,12e_3\}-(1,1,1).
\]
\end{enumerate}

Here, as throughout, the normalization of the volume is such that the convex hull of an affine lattice basis has volume~$1$. In the case of canonical lattice simplices (that is, allowing \emph{fake} weighted projective spaces), we have the following sharp bound on the volume:

\begin{theorem}[\cite{AKN15}]\label{thm:simplex_vol}
Let $P\subset\NQ$ be a $d$-dimensional canonical lattice simplex, where $d\geq 4$. Then
\[
\Vol{P}\leq 2(s_d-1)^2
\]
with equality if and only if $P\cong\conv\{0,s_1e_1,\ldots,s_{d-1}e_{d-1},2(s_d-1)e_d\}-(1,\ldots,1)$.
\end{theorem}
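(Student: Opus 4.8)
The plan is to reduce the volume bound to an arithmetic statement about the weight system of~$P$, to settle the equality case by passing to the reflexive situation (where the results already quoted apply verbatim), and to isolate the genuinely hard part as a Sylvester-governed optimisation in which the integrality of the multiplicity must be respected. First I would translate~$P$ so that the origin is its unique interior lattice point, write~$P=\conv\{v_0,\ldots,v_d\}$, introduce the reduced weights~$(\lambda_0,\ldots,\lambda_d)$ with~$\sum_i\lambda_iv_i=0$ and the sublattice index~$m=[N:N']$, and record the identity~$\Vol{P}=m\cdot h$ with~$h=\lambda_0+\cdots+\lambda_d$; the barycentric coordinates of the origin are then~$q_i=\lambda_i/h$. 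Since~$P$ canonical forces~$\P(\lambda_0,\ldots,\lambda_d)$ to be canonical, the weight system is constrained by the combinatorial condition characterising canonical weighted projective space. Ordering the weights so that~$\lambda_d=\min_i\lambda_i$, Corollary~\ref{cor:kas} yields
\[
\Vol{P}=m\,h\le\frac{h^{d}}{\lambda_0\cdots\lambda_{d-1}}=\frac{1}{q_0\cdots q_{d-1}},
\]
so the task becomes to minimise the product~$q_0\cdots q_{d-1}$ of all barycentric coordinates except the smallest, over all canonical weight systems.

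The arithmetic core is to show that this minimisation is controlled by the Sylvester sequence. Listing the coordinates in decreasing order and playing the canonical condition against a greedy unit-fraction estimate, I would argue that an optimal system is forced to take the Sylvester values~$q_i=1/s_i$ on the larger coordinates, while the two smallest weights must coincide and equal~$1/(2(s_{d}-1))$; substituting and using the defining relation~$s_1\cdots s_{d-1}=s_d-1$ returns exactly the value~$2(s_d-1)^2$. The manner in which the greedy process can terminate is what produces two optimal systems when~$d=3$ and a single one when~$d\ge4$, and this is the combinatorial source of the restriction~$d\ge4$ in the uniqueness assertion.

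For the equality statement I would first dispose of the reflexive case completely using the quoted results. If~$P$ is reflexive with associated Gorenstein~$Y=\P(\lambda_0,\ldots,\lambda_d)$, then Corollary~\ref{cor:conrads} gives~$m\mid(-K_Y)^d/h$, whence~$\Vol{P}=m\,h\le(-K_Y)^d=\Vol{S^*}$, with equality if and only if~$P\cong S^*$ by~\cite[Proposition~4.4]{Nil07}; Theorem~\ref{reflexive-vol-thm} then supplies~$(-K_Y)^d\le2(s_d-1)^2$ together with the identification of the maximiser, so that the unique reflexive extremiser is precisely the simplex~$\conv\{0,s_1e_1,\ldots,s_{d-1}e_{d-1},2(s_d-1)e_d\}-(1,\ldots,1)$ of the statement.

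The main obstacle is the passage from the reflexive case to the general one. The estimate through Corollary~\ref{cor:kas} is lossy: the multiplicity~$m$ is an integer and the bound need not be attained, exactly the integrality phenomenon underlying the~$\P(15,10,3,2)/(\Z/2\Z)$ example above, so this estimate by itself cannot deliver the sharp constant for every weight system. What remains is to prove that a canonical simplex which is \emph{not} reflexive has volume strictly below~$2(s_d-1)^2$; I would attack this by combining the Sylvester optimisation of the second step with a careful tracking of the admissible values of~$m$ dictated by the canonical condition, ruling out the skew, non-Gorenstein weight systems for which the crude bound overshoots. I expect this integrality-aware elimination of the non-reflexive systems to be the difficult step, the reflexive endgame and the pure unit-fraction optimisation being comparatively routine once it is in place.
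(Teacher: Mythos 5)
Your first two steps do match the actual proof of this theorem (which this paper only quotes from \cite{AKN15}; the closest in-paper analogue is the argument for Theorem~\ref{conj} in \S\ref{sec:proof_of_results}): the identity $\Vol{P}=\mP\cdot h$ combined with Corollary~\ref{cor:kas} gives $\Vol{P}\le 1/(q_0\cdots q_{d-1})$, and minimising the product of all barycentric coordinates except the smallest does land on the Sylvester point $\left(\frac{1}{s_1},\ldots,\frac{1}{s_{d-1}},\frac{1}{2(s_d-1)},\frac{1}{2(s_d-1)}\right)$, with two minimisers at $d=3$ and a unique one for $d\ge 4$, exactly as you say. But note that your ``greedy unit-fraction estimate over canonical weight systems'' is asserted, not proved, and as stated it is not even set up correctly: the canonical condition on weights is not a tractable constraint set for this optimisation. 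The actual mechanism is \cite[Theorem~1.1]{Ave12}, placing $(\beta_1,\ldots,\beta_n)$ in $\Chi^n$, where the inequalities \eqref{cond_ps_ineq} are precisely what forces the Sylvester structure (cf.\ \cite[Lemma~4.2]{AKN15}, which is also the engine of this paper's proof of Theorem~\ref{conj}). You should cite that framework rather than gesture at greediness.

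The genuine gap is the equality case, and your own last paragraph concedes it without seeing why the proposed repair fails. Equality forces the barycentric coordinates, hence the weights $\left(\frac{2(s_d-1)}{s_1},\ldots,\frac{2(s_d-1)}{s_{d-1}},1,1\right)$ and the multiplicity $\mP=s_d-1$; at that point there is nothing arithmetic left to eliminate, so ``integrality-aware tracking of admissible values of $m$'' cannot distinguish a reflexive extremiser from a hypothetical non-reflexive one realising the same weights and the same multiplicity. That this failure mode is real is shown by this paper's own $\P(2,2,1,1)$ example: two non-isomorphic canonical simplices with identical weights and identical maximal multiplicity, only one of them reflexive. What is actually needed -- and what \cite{AKN15} does, mirrored in \S\ref{sec:proof_of_results} here -- is a lattice-geometric uniqueness argument after the coordinates are fixed: a divisibility step in the spirit of \cite[Proposition~5.1]{AKN15} to put $P$ in a normal form $\conv\{s_1e_1,\ldots,s_{d-1}e_{d-1},a,\ldots\}$, then the half-open-cube/projection argument and an affine-lattice-basis computation showing all vertices are determined, which yields $P\cong S^*$ directly without ever splitting into reflexive and non-reflexive cases. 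Your reflexive endgame via Corollary~\ref{cor:conrads}, \cite[Proposition~4.4]{Nil07} and Theorem~\ref{reflexive-vol-thm} is fine as far as it goes (though you should check that \cite[Corollary~1.3]{BKN16} is logically independent of Theorem~\ref{thm:simplex_vol}; for reflexive simplices the bound can be obtained from \cite{Nil07} alone, avoiding any risk of circularity), but without the geometric step above the reduction to that case is never achieved.
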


A consequence of Theorem~\ref{reflexive-vol-thm} is an identical (sharp) bound for reflexive polytopes: simply replace ``canonical lattice simplex'' in the statement of Theorem~\ref{thm:simplex_vol} with ``reflexive polytope'' (see~\cite[Corollary~1.2]{BKN16}). It seems plausible that this is also the volume maximiser amongst all $d$-dimensional canonical lattice polytopes, where $d\geq 4$, although currently there is no direct evidence to support this.

\subsection{Analogous results for terminal singularities}
So far the focus has been on \emph{canonical} singularities. These questions have analogues when we restrict to \emph{terminal} singularities (and the corresponding one-point lattice simplices), however in this case far less is known. Indeed, it seems unlikely that  the methods used to prove Theorems~\ref{main} and~\ref{conj} will apply.

First, is there a sharp upper bound for~$h$? As with the canonical case, this is still wide-open. What we do know -- from the classifications~\cite[Table~4]{Kas06} and~\cite[Theorem~3.5]{Kas13} -- is that the (sharp) upper bounds in low dimensions grow significantly more slowly than in the canonical case:

\begin{enumerate}
\item\label{item:terminal_h_3}
if $d=3$ then $h\leq 19$ with equality if and only if $Y\cong\P(7,5,4,3)$;
\item\label{item:terminal_h_4}
if $d=4$ then $h\leq 881$ with equality if and only if $Y\cong\P(430,287,123,21,20)$.
\end{enumerate}
The corresponding one-point lattice simplex in~\eqref{item:terminal_h_4} is also a volume maximiser amongst all four-dimensional one-point lattice simplices~\cite[Theorem~3.12]{Kas13}. This is not the case in dimension three:

\begin{example}
Let $P\subset\NQ$ be a three-dimensional one-point lattice simplex. Then $\Vol{P}\leq 20$ with equality if and only if $P=\conv\{((1,0,0),(0,1,0),(1,2,5),(-2,-3,-5)\}$. This is number $547\,383$ in~\cite{GRDB}, and corresponds to the fake weighted projective space $X=\P^3/G$ with $\mX=5$ (see~\cite[Example~1.4]{Kas09}).
\end{example}

Next, can we give a sharp bound for the anti-canonical degree? Here we have a likely candidate:
\begin{conjecture}[{cf.~\cite[Lemma~3.7]{Kas13}}]
Let~$X$ be a~$d$-dimensional fake weighted projective space with at worst terminal singularities, where~$d\geq 2$. Then
\[
(-K_X)^d\leq\frac{s_d^d}{(s_d-1)^{d-2}}
\]
with equality if and only if~$X\cong\P\left(\frac{s_d-1}{s_1},\ldots,\frac{s_d-1}{s_{d-1}},1,1\right)$.
\end{conjecture}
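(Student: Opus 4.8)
The plan is to reduce to an extremal problem for the weights of a \emph{genuine} weighted projective space and then to attack that problem by a Sylvester-sequence argument modelled on the canonical volume and degree bounds~\cite{AKN15,BKN16}. Writing $X=Y/G$ with $Y=\P(\lambda_0,\ldots,\lambda_d)$ and $\mX=|G|$, the quotient map $\pi\colon Y\to X$ is \'etale in codimension one, so $-K_Y=\pi^*(-K_X)$ and the projection formula gives $(-K_Y)^d=\mX\cdot(-K_X)^d$; hence $(-K_X)^d=(-K_Y)^d/\mX\le(-K_Y)^d$. Since terminality of $X$ forces terminality of $Y$, it suffices to bound $(-K_Y)^d=h^d/(\lambda_0\cdots\lambda_d)$, where $h:=\lambda_0+\cdots+\lambda_d$, over all $d$-dimensional terminal weighted projective spaces; granting such a bound together with its equality case, the chain $(-K_X)^d\le(-K_Y)^d\le s_d^d/(s_d-1)^{d-2}$ then forces $\mX=1$ and $Y$ extremal. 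That the proposed maximiser is tight follows from the Sylvester identity $\sum_{i=1}^{d-1}1/s_i=1-1/(s_d-1)$, which gives $h=s_d$ and $\lambda_0\cdots\lambda_d=(s_d-1)^{d-2}$, so that $(-K_Y)^d=s_d^d/(s_d-1)^{d-2}$ exactly.

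The remaining problem is purely combinatorial. Ordering $\lambda_0\ge\cdots\ge\lambda_d$ and putting $q_i:=\lambda_i/h$, one has $(-K_Y)^d=1/(h\prod_i q_i)$, so maximising the degree is the same as minimising $h\prod_i q_i$ subject to $\sum_i q_i=1$ together with the terminal constraint. Terminality of a well-formed $\P(\lambda_0,\ldots,\lambda_d)$ is exactly the one-point-simplex condition, which I would encode through the fractional-part (age) sums $\sum_i\{k\lambda_i/h\}$ for $1\le k\le h-1$, as in~\cite{Kas09,Kas13}. The point of this step is to distil from those conditions a clean system of lower bounds on the ordered weights, ideally a reciprocal-sum inequality of the form $\sum_i 1/w_i<1$ for appropriate integers $w_i$ built from the $\lambda_i$.

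The heart of the argument, and the step I expect to be the main obstacle, is the extremal problem that results. Here I would imitate the greedy/Sylvester machinery of~\cite{AKN15,BKN16}: peel off the largest weight $\lambda_0$, use the constraint to bound it and to reduce to a lower-dimensional instance, and argue by induction that the governing inequality is saturated precisely by the choice $\lambda_i=(s_d-1)/s_i$ for $i\le d-2$ together with two trailing weights equal to one, yielding both the bound and its uniqueness. The genuine difficulty is that, in contrast to the reflexive (Gorenstein) situation of~\cite{Con02,Nil07}, the extremal terminal space is \emph{not} Gorenstein, so the $w_i$ are not simply $h/\lambda_i$ and one cannot reduce to unit-fraction partitions of $1$; identifying the correct reciprocal-sum relation hidden in the terminal inequalities is where the real work lies.

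The small cases $d\le4$ can be settled directly from the classifications~\cite{Kas06,Kas13}. I would, however, flag one subtlety that any inductive scheme must respect: projective space $\P^d$ is itself terminal with $(-K_{\P^d})^d=(d+1)^d$, and this meets the claimed bound when $d=2$ but strictly \emph{exceeds} it when $d=3$, where $s_3^3/(s_3-1)=343/6<64$. Thus the Sylvester value overtakes $(d+1)^d$ only once $d\ge4$, and a complete proof must either be restricted to $d\ge4$ or explicitly treat projective space as a competing maximiser in the lowest dimensions.
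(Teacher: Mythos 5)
The statement you are addressing is presented in the paper as an open \emph{conjecture}: the authors give no proof of it, so there is no argument of theirs to compare yours against. Your proposal is, by your own admission, a strategy rather than a proof. The reduction $(-K_X)^d=(-K_Y)^d/\mX\le(-K_Y)^d$ via the codimension-one \'etale quotient is correct, as is the verification that the candidate weights give $h=s_d$ and $\lambda_0\cdots\lambda_d=(s_d-1)^{d-2}$ via $\sum_{i=1}^{d-1}1/s_i=1-1/(s_d-1)$. But the entire content of the problem --- a sharp upper bound on $h^d/(\lambda_0\cdots\lambda_d)$ over terminal weighted projective spaces, together with uniqueness of the maximiser --- is precisely the step you defer (``where the real work lies''). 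The paper itself warns that the canonical-case machinery does not transfer: the inequalities \eqref{cond_ps_ineq} defining $\Chi^n$ come from the \emph{canonical} hypothesis via \cite{Ave12}, and no analogous clean system of constraints on the barycentric coordinates is known in the terminal case. So the heart of the argument is missing, not merely unwritten.

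Your final paragraph, however, contains an observation you should state far more forcefully, because it shows the conjecture is \emph{false as written}. For $d=3$ the smooth variety $\P^3$ is a terminal fake weighted projective space with $(-K_{\P^3})^3=4^3=64$, whereas the claimed bound is $s_3^3/(s_3-1)=343/6<58$; the inequality therefore fails at $d=3$. Any correct version must either restrict to $d\ge4$ (for $d=2$ the bound $9$ is attained by $\P^2$, which coincides with the claimed maximiser, and for $d\ge4$ one checks $s_d^d/(s_d-1)^{d-2}>(d+1)^d$), or replace the right-hand side by $\max\bigl\{(d+1)^d,\,s_d^d/(s_d-1)^{d-2}\bigr\}$ in low dimensions. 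Do not bury this as a ``subtlety'' that an inductive scheme must respect: it is a counterexample to the statement as given, and no proof strategy can succeed before the statement is repaired.
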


Finally, what is the analogous statement to Theorem~\ref{main} in the terminal case? That is, is there a sharp upper bound on the multiplicity of a~$d$-dimensional fake weighted projective space~$X$ with at worst terminal singularities? We can only offer the results of~\cite{Kas06} and~\cite[Theorem~3.12]{Kas13}:
\begin{enumerate}
\item
if $d=3$ then $\mX\leq 5$ with equality if and only if $X_P\cong\P^3/G$ corresponding to the one-point lattice simplex
\[
P=\conv\{(1,0,0),(0,1,0),(1,2,5),(-2,-3,-5)\};
\]
\item
if $d=4$ then $\mX\leq 41$ with equality if and only if $X_P\cong\P^4/G$ corresponding to the one-point lattice simplex
\[
P=\conv\{(1,0,0,0),(0,1,0,0),(0,0,1,0),(3,10,28,41),(-4,-11,-29,-41)\}.
\]
\end{enumerate}
Notice that these bounds satisfy $\mX\leq s_d-2$.

\section{Proof of Theorem~\ref{main} and Theorem~\ref{conj}}\label{sec:proof_of_results}
As mentioned above, for~$d\le 4$ the result can be computationally verified. Hence we may assume~$d \ge 5$. We follow an ansatz that has already been successfully used in~\cite{AKN15}. Therefore, from now on we will use the notation used therein. Let~$P$ be a~$d$-dimensional canonical lattice simplex. We set~$n := d+1$, so~$n \ge 6$. Up to reordering the vertices of the of~$P$, we may assume that~$\beta_1 \ge \ldots \ge \beta_{n} >0$ are the barycentric coordinates of the origin with respect to the vertices $v_1, \ldots, v_{n}$ of~$P$, i.e., 
\[\sum_{i=1}^{n} \beta_i v_i = 0, \quad \sum_{i=1}^{n} \beta_i = 1.\]

The following lemma is a consequence of Corollary~2.11 in~\cite{Kas09}.

\begin{lemma}In this setting, 
\[\mP \le \frac{\beta_n}{\beta_1 \cdots \beta_{n-1}}.\]
\label{lemma-bound}
\end{lemma}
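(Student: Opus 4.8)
The plan is to connect the quantity in Lemma~\ref{lemma-bound} to the bound in Corollary~\ref{cor:kas} by converting barycentric coordinates into reduced weights. First I would observe that the barycentric coordinates $\beta_i$ and the reduced weights $\lambda_i$ of $P$ encode exactly the same linear dependence $\sum_i \lambda_i v_i = 0$, so they are proportional: there is a positive scalar $c$ with $\lambda_i = c\,\beta_i$ for every $i$, where $c = \sum_i \lambda_i$ is the (integer) weight-sum $h$. The ordering $\beta_1 \ge \cdots \ge \beta_n$ then corresponds precisely to $\lambda_1 \ge \cdots \ge \lambda_n$, so the smallest weight is $\lambda_n$, matching the hypothesis $\lambda_d \le \lambda_i$ in Corollary~\ref{cor:kas} (after the index shift $n = d+1$).

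**Reducing to Corollary~\ref{cor:kas}.** With $\lambda_i = h\,\beta_i$ in hand, I would rewrite the right-hand side of Corollary~\ref{cor:kas}, namely $h^{d-1}/(\lambda_0\cdots\lambda_{d-1})$, in terms of the $\beta_i$. Using the labelling where $\lambda_n$ is the smallest weight (so the product $\lambda_0\cdots\lambda_{d-1}$ omits $\lambda_n$, i.e.\ omits the last one), we have
\[
\frac{h^{d-1}}{\lambda_1\cdots\lambda_{n-1}} = \frac{h^{d-1}}{(h\beta_1)\cdots(h\beta_{n-1})} = \frac{h^{d-1}}{h^{n-1}\,\beta_1\cdots\beta_{n-1}}.
\]
Since $n-1 = d$, the powers of $h$ collapse: $h^{d-1}/h^{n-1} = h^{d-1}/h^{d} = 1/h = \beta_n/\lambda_n \cdot (1/\beta_n)\cdots$. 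More directly, $h^{d-1}/h^{n-1} = 1/h$, and then $1/h = \beta_n/\lambda_n$; substituting gives
\[
\frac{h^{d-1}}{\lambda_1\cdots\lambda_{n-1}} = \frac{1}{h\,\beta_1\cdots\beta_{n-1}} = \frac{\beta_n}{\lambda_n\,\beta_1\cdots\beta_{n-1}} = \frac{\beta_n}{\beta_1\cdots\beta_{n-1}},
\]
where the last equality uses $\lambda_n = h\beta_n$ once more to cancel. This is exactly the asserted upper bound.

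**Executing the bookkeeping.** The substance of the argument is entirely the power-of-$h$ cancellation above; once the proportionality $\lambda_i = h\beta_i$ is set up, the two expressions are formally identical. I would therefore present the proof as: (i) note $\lambda_i = h\beta_i$ with $h = \lambda_0 + \cdots + \lambda_d$, (ii) verify the ordering hypothesis of Corollary~\ref{cor:kas} is met, and (iii) substitute and simplify, being careful that the product in the denominator of~\eqref{eq:canonical_bound} runs over all indices \emph{except} the one carrying the minimal weight, which matches the product $\beta_1 \cdots \beta_{n-1}$ excluding the minimal $\beta_n$.

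**Main obstacle.** There is no real analytic difficulty here; the only thing to watch is the index convention. The excerpt's Corollary~\ref{cor:kas} uses indices $0,\ldots,d$ with $\lambda_d$ smallest, whereas the present setting uses $1,\ldots,n$ with $\beta_n$ smallest and $n = d+1$. The single point requiring care is ensuring the excluded factor is consistently the minimal weight/coordinate on both sides, so that the denominators genuinely agree term by term; everything else is the one-line exponent cancellation $h^{d-1}/h^{d} = 1/h$.
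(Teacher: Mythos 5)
Your overall strategy is the same as the paper's: reduce to Corollary~\ref{cor:kas} via the proportionality $\lambda_{i-1} = h\beta_i$. However, the final step of your computation contains a genuine error. You correctly arrive at
\[
\frac{h^{d-1}}{\lambda_0\cdots\lambda_{d-1}} \;=\; \frac{1}{h\,\beta_1\cdots\beta_{n-1}} \;=\; \frac{\beta_n}{\lambda_d\,\beta_1\cdots\beta_{n-1}},
\]
but then you assert that this \emph{equals} $\beta_n/(\beta_1\cdots\beta_{n-1})$, claiming that ``$\lambda_n = h\beta_n$'' makes the remaining $\lambda_d$ cancel. It does not: the factor $\lambda_d$ in the denominator simply stays there, and the two expressions agree only when the minimal weight is $1$. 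Your summary that ``the two expressions are formally identical'' is therefore false in general --- the bound in the lemma is \emph{weaker} than the bound in Corollary~\ref{cor:kas}, by exactly the factor $\lambda_d$.

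The repair is one line, and it is precisely what the paper does: since the weights are positive integers, $\lambda_d \ge 1$, hence
\[
\mP \;\le\; \frac{h^{d-1}}{\lambda_0\cdots\lambda_{d-1}} \;\le\; \frac{\lambda_d\, h^{d-1}}{\lambda_0\cdots\lambda_{d-1}} \;=\; \frac{\beta_n}{\beta_1\cdots\beta_{n-1}}.
\]
So the missing ingredient is the integrality (in fact just $\lambda_d \ge 1$) of the smallest weight, used to pass from the Corollary's bound to the lemma's bound via an \emph{inequality} rather than an identity. Everything else in your write-up (the proportionality, the ordering check, the exponent bookkeeping) is correct and matches the paper.
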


\begin{proof}
Let~$X$ be the fake weighted projective space corresponding to~$P$, and~$\P(\lambda_0, \ldots, \lambda_d)$ its associated weighted projective space. As~$\lambda_d \ge 1$, we get from Corollary~\ref{cor:kas} 
\[\mP \le \frac{h^{d-1}}{\lambda_0 \cdots \lambda_{d-1}} \le \frac{\lambda_d h^{d-1}}{\lambda_0 \cdots \lambda_{d-1}}.\]
As is well-known (see~\cite{Nil07,Kas09}), we have~$(\beta_1, \ldots, \beta_n) = (\lambda_0/h, \ldots, \lambda_d/h)$, so the previous equation gives the desired result.
\end{proof}

As in~\cite{AKN15}, let us turn the originally geometric question into an optimization problem. We recall the following notation from~\cite{AKN15}. Let~$\Chi^n$ denote the set of~$n$-tuples $(x_1, \ldots, x_n) \in \R^n$ which fulfil the conditions 
\begin{empheq}{align}
& x_1 + \ldots + x_n = 1, &\; \label{cond_sum_one}\\
& 1 \ge x_1 \ge \ldots \ge x_n \ge 0, &\;\label{cond_ordered}\\
& x_1 \cdots x_j \le x_{j+1} + \ldots + x_n \quad \forall \; j \in \{1, \ldots, n-1\}. &\; \label{cond_ps_ineq}
\end{empheq}

It is the key result, Theorem 1.1 in~\cite{Ave12}, that~$(\beta_1, \ldots, \beta_{n})$ belongs to the set~$\Chi^n$. 
Given a continuous function~$f: \Chi^n \rightarrow \R$, we denote by~$\IK^n(f)$ the optimization problem of minimizing~$f$ over the set~$\Chi^n$. 
Lemma 4.2(b) in~\cite{AKN15} shows that all optimal solutions of~$\IK^n(x_1 \cdots x_{n-1}  x_n^{-1})$ are of the form
\[\left(\frac{1}{s_1}, \ldots, \frac{1}{s_{l-1}}, \frac{1}{(n-l+1)(s_l - 1)}, \ldots, \frac{1}{(n-l+1)(s_l - 1)} \right)\]
for some~$l \in \{1, \ldots, n\}$. Hence, by considering again the reciprocal we see that~$\mP$ is bounded from above by the maximum of  
\[f_n (l) := \frac{s_1 \cdots s_{l-1} (n-l+1)^{n-l} (s_l-1)^{n-l}}{(n-l+1)(s_l-1)}= (n-l+1)^{n-l-1} (s_l-1)^{n-l}\]
for~$l \in \{1, \ldots, n\}$. 

For~$n=6$ we can directly compute the maximum and see that it is attained uniquely at~$(n,l)=(6,4)$. 
For~$n \ge 7$, it is convenient to let~$r := n-l$, so,~$0 \le r \le n-1$. As we have~$f_n(l) = (r+1)^{r-1} (s_{n-r}-1)^{r}$ and~$f_n(n-2) = 3 (s_{n-2}-1)^{2}$, the upper bound in Theorem~\ref{main} results from the following result.

\begin{lemma}Let~$n \ge 6$, and~$r \in \{0, \ldots, n-1\}$. Then 
\[(r+1)^{r-1} (s_{n-r}-1)^r \le 3 (s_{n-2}-1)^2,\]
with equality if and only if~$r=2$.
\label{lemma1}
\end{lemma}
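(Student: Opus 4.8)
The plan is to set $a_k := s_k - 1$ and exploit the defining recursion of the Sylvester sequence in the form $a_k = s_{k-1}(s_{k-1}-1) = (a_{k-1}+1)\,a_{k-1}$, so that $a_1=1,\,a_2=2,\,a_3=6,\,a_4=42,\,a_5=1806$, and in particular $a_{n-r}=(a_{n-r-1}+1)\,a_{n-r-1}$. Writing $g(r):=(r+1)^{r-1}(s_{n-r}-1)^r=(r+1)^{r-1}a_{n-r}^{\,r}$, the assertion becomes: over $r\in\{0,\ldots,n-1\}$ the function $g$ attains its maximum $g(2)=3a_{n-2}^{\,2}$ uniquely at $r=2$. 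I would prove this by showing that $g$ is \emph{strictly unimodal} with peak at $r=2$, i.e.\ $g(0)<g(1)<g(2)$ and $g(2)>g(3)>\cdots>g(n-1)$; since all inequalities are strict this also yields the equality statement.

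First I would compute the consecutive ratio. Using the recursion to cancel $a_{n-r}^{\,r}=(a_{n-r-1}+1)^r a_{n-r-1}^{\,r}$ gives the clean expression
\[
\frac{g(r+1)}{g(r)}=\frac{(r+2)^r}{(r+1)^{r-1}}\cdot\frac{a_{n-r-1}}{(a_{n-r-1}+1)^r}.
\]
Setting $b:=a_{n-r-1}\ge 1$, the factor $b/(b+1)^r$ is non-increasing in $b$ for $r\ge 2$ (its derivative has numerator $1-(r-1)b\le 0$), so for fixed $r$ the ratio is largest for the smallest admissible index $n-r-1$. The left tail is then immediate: $g(1)/g(0)=a_{n-1}\ge a_5>1$, and $g(2)/g(1)=3a_{n-2}/(a_{n-2}+1)>1$ since $a_{n-2}\ge a_4=42$ for $n\ge 6$.

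For the right tail I would argue $g(r+1)/g(r)<1$ for all $r\ge 2$. When $r=2$ the hypothesis $n\ge 6$ forces $b=a_{n-3}\ge a_3=6$, and monotonicity in $b$ reduces the claim to checking $\tfrac{16}{3}\cdot\tfrac{6}{7^2}=\tfrac{32}{49}<1$. For $r\ge 3$, since $b\ge 1$ always holds, monotonicity in $b$ reduces the claim to the case $b=1$, namely
\[
\phi(r):=\frac{(r+2)^r}{2^r(r+1)^{r-1}}<1.
\]

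I expect this last purely analytic inequality to be the main obstacle, because the margin is thin for small $r$: one computes $\phi(3)=125/128$, only just below $1$, so any crude exponential estimate overshoots here. My plan is therefore to treat $r=3$ by the exact value $\phi(3)=125/128<1$, and for $r\ge 4$ to rewrite $\phi(r)=(r+1)\bigl(1+\tfrac1{r+1}\bigr)^r\big/2^r$ and use $\bigl(1+\tfrac1{r+1}\bigr)^r<e$, reducing matters to $(r+1)e<2^r$. The auxiliary sequence $(r+1)e/2^r$ is strictly decreasing, since the ratio of successive terms is $(r+2)/\bigl(2(r+1)\bigr)<1$, and its value $5e/16<1$ at $r=4$ shows it remains below $1$ for all $r\ge 4$. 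This establishes $\phi(r)<1$ for every $r\ge 3$, completing the right tail. Strict unimodality then gives $g(r)<g(2)$ for all $r\ne 2$, which is exactly the asserted inequality together with the stated equality case.
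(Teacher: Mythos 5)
Your argument is correct, and it takes a genuinely different route from the paper. The paper proves the lemma by induction on $n$ along the diagonal $n-r=\mathrm{const}$: the case $(n,r)$ with $r\ge 3$ is deduced from $(n-1,r-1)$ (note $s_{(n-1)-(r-1)}=s_{n-r}$), the correction factor is bounded via $\bigl(\tfrac{r+1}{r}\bigr)^{r-1}<e$ and the inequality $(s_{n-r}-1)er\le s_{n-3}^2$, and the base case $n=6$ is checked by direct computation. You instead fix $n$ and run a ratio test in $r$, using the Sylvester recursion $s_k-1=(s_{k-1})(s_{k-1}-1)$ to get the clean consecutive ratio $\tfrac{(r+2)^r}{(r+1)^{r-1}}\cdot\tfrac{b}{(b+1)^r}$ with $b=s_{n-r-1}-1$, and then prove strict unimodality with peak at $r=2$. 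I checked the details: the monotonicity of $b/(b+1)^r$ in $b$ for $r\ge 2$, the use of $n\ge 6$ to force $b\ge 6$ in the critical case $r=2$ (where $b=1$ would fail, since $\tfrac{16}{3}\cdot\tfrac14>1$), the exact evaluation $\phi(3)=125/128$ where an $e$-estimate would overshoot, and the decreasing majorant $(r+1)e/2^r$ for $r\ge4$ all hold. What your approach buys is a self-contained argument with no separate base-case computation (the $n=6$ case falls out of the general chain) and a sharper structural statement (strict unimodality of $r\mapsto(r+1)^{r-1}(s_{n-r}-1)^r$, not just the location of the maximum); what the paper's induction buys is that the only analytic input is a single crude estimate, at the cost of an explicitly verified base case. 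Both proofs isolate the same tight spots, just parametrized differently.
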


\begin{proof}
We will proceed by induction on~$n$. As mentioned above it holds for~$n=6$, so let~$n \ge 7$. 
For~$r=0$ we get a strict inequality. For~$r=1$, the left side gives~$s_{n-1}-1 = s_{n-2} (s_{n-2} - 1)$ which is also strictly smaller than~$3 (s_{n-2}-1) (s_{n-2}-1)$. 
For~$r=2$ one has equality. Let~$r \ge 3$. By induction the statement holds for~$(n-1,r-1)$, i.e., 
\[r^{r-2}(s_{n-r}-1)^{r-1}\le 3 (s_{n-3}-1)^2,\]
thus
\[(r+1)^{r-1}(s_{n-r}-1)^r \le 3 (s_{n-3}-1)^2 (s_{n-r}-1)\left(\frac{r+1}{r}\right)^{r-1} r.\]
Becaue of~$\left(\frac{r+1}{r}\right)^{r-1} < \left(\frac{r+1}{r}\right)^r < e$, it suffices to show
\[(s_{n-3}-1)^2 (s_{n-r}-1) e r \le (s_{n-2}-1)^2 = s_{n-3}^2 (s_{n-3}-1)^2,\]
equivalently,
\[(s_{n-r}-1)e r \le s_{n-3}^2.\]
This holds because of the following inequalities 
\[(s_{n-r}-1)e r \stackrel{r \ge 3}{\le} (s_{n-3}-1)e(n-1) \stackrel{n \ge 7}{\le} (s_{n-3}-1)s_{n-3}<s_{n-3}^2.\]
\end{proof}

Let us now consider the equality case, i.e., let $P$ be a~$d$-dimensional canonical lattice simplex with $\mP=3 (s_{d-1}-1)^2$ and $d \ge 5$. It follows from the considerations above and Lemma~\ref{lemma1} that (as $r=2$, so $l=n-2=d-1$) the barycentric coordinates of the origin are completely determined:
\begin{equation}
(\beta_1, \ldots, \beta_{d+1}) = \left(\frac{1}{s_1}, \ldots, \frac{1}{s_{d-2}}, \frac{1}{3(s_{d-1} - 1)}, \frac{1}{3(s_{d-1} - 1)}, \frac{1}{3(s_{d-1} - 1)} \right).
\label{eq:bary}
\end{equation}

Let us recall that two lattice polytopes $Q,Q'$ in $N_\Q$ are considered {\em isomorphic} (or {\em unimodularly equivalent}) if $Q' = A Q + w$ for some $A \in \GL(N)$ and $w \in N$. The next result is a modification of \cite[Lemma~5.2]{AKN15}.

\begin{lemma}
Let $d \ge 5$, and $S$ a~$d$-dimensional canonical lattice simplex where the barycentric coordinates of the origin with respect to the vertices of $S$ are given by \eqref{eq:bary}. Then $S$ is isomorphic to a lattice simplex $\conv\{s_1 e_1, \ldots, s_{d-2} e_{d-2},a,b,c\}$ for $a,b,c \in N$ with $a+b+c=0$ that contains $e_1 + \cdots + e_{d-2}$ as its unique interior lattice point.
\label{lemma2}
\end{lemma}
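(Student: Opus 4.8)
The plan is to turn the claim into the purely lattice-theoretic task of producing a $\Z$-basis $f_1,\ldots,f_d$ of $N$ in which the first $d-2$ vertices acquire the prescribed diagonal shape. Write $v_1,\ldots,v_{d+1}$ for the vertices of $S$; by \eqref{eq:bary} we have $\sum_i\beta_iv_i=0$ with $\beta_i=1/s_i$ for $i\le d-2$ and $\beta_{d-1}=\beta_d=\beta_{d+1}=1/(3(s_{d-1}-1))$, which after clearing denominators becomes the integral relation $\sum_{i\le d-2}\lambda_iv_i+v_{d-1}+v_d+v_{d+1}=0$ with $\lambda_i=3\prod_{k\le d-2,\,k\ne i}s_k$. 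It suffices to find a basis $f_1,\ldots,f_d$ of $N$ with $v_i=s_if_i-(f_1+\cdots+f_{d-2})$ for all $i\le d-2$: the linear map $f_k\mapsto e_k$ composed with the translation by $p:=e_1+\cdots+e_{d-2}$ then carries $S$ to a simplex of the required form, sends the interior point $0$ to $p$, and --- because $\sum_i\beta_iv_i=0$ --- forces the images $a,b,c$ of the last three vertices to satisfy $a+b+c=0$.

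The candidate basis is dictated by the Sylvester structure. I would set the explicit lattice point
\[
q:=\sum_{i\le d-2}\Bigl(\prod_{k\le d-2,\,k\ne i}s_k\Bigr)v_i\in N,\qquad f_i:=\frac{v_i+q}{s_i}.
\]
Using $\sum_{i\le d-2}1/s_i=1-1/(s_{d-1}-1)$ one checks directly that $\sum_{i\le d-2}f_i=q$, so the relation $v_i=s_if_i-\sum_jf_j$ holds automatically and only the integrality of the $f_i$ remains. This is exactly where the arithmetic of the Sylvester sequence is used: the $s_i$ are pairwise coprime, and since $\prod_{k<i}s_k=s_i-1\equiv-1$ and $\prod_{k>i}s_k\equiv1\pmod{s_i}$, we get $\prod_{k\ne i}s_k\equiv-1\pmod{s_i}$. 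Hence the coefficient $1+\prod_{k\ne i}s_k$ of $v_i$ in $v_i+q$ is divisible by $s_i$, while every remaining coefficient $\prod_{k\ne j}s_k$ $(j\ne i)$ already contains the factor $s_i$; thus $v_i+q\in s_iN$ and each $f_i$ is a lattice point.

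The substantial point --- and the step I expect to be the main obstacle --- is to promote $f_1,\ldots,f_{d-2}$ to a $\Z$-basis of the \emph{full} lattice $N$, equivalently to show that the sublattice they generate is saturated. Writing $U:=\mathrm{span}_\Q(v_1,\ldots,v_{d-2})$, one must establish $N\cap U=\Z f_1+\cdots+\Z f_{d-2}$; the inclusion $\supseteq$ is immediate, but the reverse genuinely requires the canonical hypothesis, since the barycentric data of \eqref{eq:bary} alone does not determine $N$. The mechanism is that $U$ passes through the unique interior point $0$, so a proper refinement of $\Z f_1+\cdots+\Z f_{d-2}$ inside $N\cap U$ would, by a convexity argument around $0$, yield a lattice point in the interior of $S$ distinct from $0$, contradicting canonicity. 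To make this rigorous I would bring in the forced boundary lattice points $x_j:=-\sum_{i\le j}\bigl(\prod_{k\le j,\,k\ne i}s_k\bigr)v_i$, which the tight partial-product equalities in \eqref{cond_ps_ineq} (these hold with equality precisely for $j\le d-2$ in our situation) place on the faces $\conv\{v_{j+1},\ldots,v_{d+1}\}$; these points control the cross-section $S\cap U$ and let one rule out any finer lattice in $U$.

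Once saturation is in hand, extend $f_1,\ldots,f_{d-2}$ by arbitrary $f_{d-1},f_d$ to a basis of $N$ and read off the isomorphism above: it carries $S$ to $\conv\{s_1e_1,\ldots,s_{d-2}e_{d-2},a,b,c\}$ with $a+b+c=0$ and unique interior lattice point $p=e_1+\cdots+e_{d-2}$, as required. The whole argument runs parallel to the proof of \cite[Lemma~5.2]{AKN15}; the only difference is that the tail of equal barycentric weights now has length three rather than two, which leaves the diagonalization of the $s_i$-vertices untouched.
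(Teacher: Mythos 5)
Your construction up to the saturation question is correct and runs parallel to the paper's argument. Your $q=\sum_{i\le d-2}\bigl(\prod_{k\ne i}s_k\bigr)v_i$ is, up to sign, exactly the point $x=(u+v+w)/3$ that the paper shows lies in $N$ (your route to this, writing it directly as an integral combination of $v_1,\ldots,v_{d-2}$, is if anything cleaner than the paper's divisibility argument via \cite[Proposition~5.1]{AKN15}); the integrality of the $f_i$ via $\prod_{k\ne i}s_k\equiv-1\pmod{s_i}$ is fine; and the reduction of the lemma to the claim that $\Z f_1+\cdots+\Z f_{d-2}$ is saturated in $N$ is valid.

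The gap is precisely the step you yourself flag as the main obstacle, and your sketch of it does not constitute a proof. Saturation of $\Z f_1+\cdots+\Z f_{d-2}$ is equivalent to the statement that the $(d-2)$-dimensional simplex $R:=\conv\{v_1,\ldots,v_{d-2},-q\}$ --- whose relative interior lies in the interior of $S$, so that $0$ is its only relatively interior lattice point --- has multiplicity one with respect to the lattice $N\cap U$. Your proposed mechanism, namely that a finer lattice in $U$ ``would, by a convexity argument around $0$, yield a lattice point in the interior of $S$ distinct from $0$,'' is exactly the assertion to be proved rather than a reason for it: refining the lattice of a canonical simplex does not in general create new interior lattice points (otherwise every canonical fake weighted projective space would have multiplicity one), so the specific Sylvester structure must enter, and your appeal to the boundary points $x_j$ coming from the tight inequalities in \eqref{cond_ps_ineq} is left entirely unexplained. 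The paper closes this step by citing \cite[Theorem~2.1(c)]{AKN15}, which says that a canonical lattice simplex with barycentric coordinates $\left(\frac{1}{s_1},\ldots,\frac{1}{s_{d-2}},\frac{1}{s_{d-1}-1}\right)$ is unique up to isomorphism; alternatively, you could apply Lemma~\ref{lemma-bound} (i.e.\ Corollary~\ref{cor:kas}) to $R$ itself, whose barycentric data give $\mult R\le\frac{1/(s_{d-1}-1)}{\prod_{i=1}^{d-2}1/s_i}=1$. Some such input is needed; as written, your argument stops short of it.
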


\begin{proof}
Let the vertices of $S$ be given as $p_1, \ldots, p_{d-2}, u,v,w$, thus,
\begin{equation}
0 = \left(\sum_{i=1}^{d-2} \frac{p_i}{s_i}\right) +\frac{1}{3(s_{d-1}-1)} (u+v+w).
\label{eq:verts}
\end{equation}
Multiplying both sides by $3$ and plugging in $\frac{1}{s_{d-1}-1} = 1 - \sum_{i=1}^{d-2}\frac{1}{s_i}$ implies that 
\[\sum_{i=1}^{d-2}\frac{3p_i-(u+v+w)}{s_i} = -(u+v+w) \in N.\]
As $s_1, \ldots, s_{d-2}$ are relatively prime, applying \cite[Proposition~5.1]{AKN15} coordinatewise yields that for all $i = 1, \ldots, d-2$
\[\frac{3p_i-(u+v+w)}{s_i} \in N.\]
In particularly, for $i=2$ we deduce, since $s_2=3$, that $p_2-\frac{u+v+w}{3} \in N$, so $x := \frac{u+v+w}{3} \in N$. Hence, \eqref{eq:verts} implies that $R := \conv\{p_1, \ldots, p_{d-2},x\}$ is a $(d-2)$-dimensional canonical lattice simplex whose origin has the the barycentric coordinates $\left(\frac{1}{s_1}, \ldots, \frac{1}{s_{d-2}}, \frac{1}{s_{d-1} - 1}\right)$ with respect to the vertices of $R$. Now, \cite[Theorem~2.1(c)]{AKN15} implies that $R$ is isomorphic to $\conv\{s_1 e_1, \ldots, s_{d-2} e_{d-2},0\}$ with unique interior lattice point $e_1 + \cdots + e_{d-2}$. The statement of the lemma follows now from extending this unimodular equivalence to $S$.
\end{proof}

Applying this lemma, we may assume $P = \conv\{s_1 e_1, \ldots, s_{d-2} e_{d-2},a,b,c\}$ for $a,b,c \in N$ with $a+b+c=0$ and that $p := e_1 + \cdots + e_{d-2}$ is the unique interior lattice point of $P$. Again, we consider the $(d-2)$-dimensional simplex $R := \conv\{0,s_1 e_1, \ldots, s_{d-2} e_{d-2}\}$ with $\relint(R) \subset \relint(P)$. As $R-p$ is reflexive of multiplicity $1$ and minimal weight $1$ (cf. \cite[Proposition~4.4]{Nil07}) we have that $p,s_1 e_1, \ldots, s_{d-2} e_{d-2}$ forms an affine lattice basis of $\Z e_1 + \dots + \Z e_{d-2}$. Let us denote the projection of $\R^d$ to the last two coordinates by $\pi$, and define $a' := \pi(a), b':=\pi(b), c':=\pi(c)$. The image $P' := \pi(P)$ is a triangle with vertices $a',b',c'$. By our assumption on the multiplicity of $P$, we see that the index of the sublattice generated by $a',b'$ in $\Z^2$ (its determinant) equals $3 (s_{d-1}-1)^2$. As this also holds for $a',c'$ and $b',c'$, we see that $P'$ has area $9 (s_{d-1}-1)^2$. We set 
\[T' := P'/(s_{d-1}-1).\]
We define $\Delta_2 := \conv\{0,e_1,e_2\}\subset \R^2$. 

\begin{lemma}
In this notation, $T'$ is a lattice triangle with $T' \cong 3 \Delta_2$.
\label{lemma3}
\end{lemma}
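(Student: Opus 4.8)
The plan is to exploit the fact that, because $a+b+c=0$, the simplex $P$ fibres over the triangle $P'$ with fibres that are dilates of the reflexive simplex $R$, and then to finish by a short lattice computation. Throughout set $m:=s_{d-1}-1$.

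First I would make the fibration explicit. For $q'\in P'$ with barycentric coordinates $q'=t_aa'+t_bb'+t_cc'$ relative to the vertices $a',b',c'$, the relation $a+b+c=0$ forces
\[
\pi^{-1}(q')\cap P \;=\; w(q') + 3\min\{t_a,t_b,t_c\}\cdot R,\qquad w(q'):=t_aa+t_bb+t_cc .
\]
Indeed the admissible weights on $a,b,c$ form the one-parameter family $(t_a+\lambda,t_b+\lambda,t_c+\lambda)$, whose contribution $\lambda(a+b+c)$ vanishes, while the remaining weight budget $-3\lambda\in[0,3\min\{t_a,t_b,t_c\}]$ sweeps out the stated dilate of $R=\conv\{0,s_1e_1,\ldots,s_{d-2}e_{d-2}\}$. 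Over the centroid $q'=0$ the fibre is exactly $R$, whose unique interior lattice point is $p$; since $R-p$ is reflexive this is the single interior lattice point of $P$. Hence canonicity of $P$ is \emph{equivalent} to the requirement that for every nonzero $q'\in\relint(P')\cap\Z^2$ the dilate $w(q')+3\min\{t_a,t_b,t_c\}\cdot R$ contains no lattice point in its relative interior.

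The decisive step is to show that $a',b',c'\in m\Z^2$, equivalently that $T'$ is a lattice triangle. From $c'=-a'-b'$ and $\lvert\det(a',b')\rvert=3m^2$ it follows that $\{a',b'\}$ is a $\Z$-basis of the index-$3m^2$ sublattice $\Lambda:=\langle a',b'\rangle=\langle a',c'\rangle=\langle b',c'\rangle$, so that $P'=\conv\{a',b',-a'-b'\}$ is the standard $\P^2$-triangle of $\Lambda$. Writing $\Z^2/\Lambda\cong\Z/d_1\times\Z/d_2$ with $d_1\mid d_2$ and $d_1d_2=3m^2$, one has $d_1\le m\sqrt3<2m$, and $\Lambda\subseteq m\Z^2$ is equivalent to $m\mid d_1$, hence to $d_1=m$ and $\Z^2/\Lambda\cong\Z/m\times\Z/3m$. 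This is exactly where canonicity must be fed in: using the criterion of the previous paragraph together with the explicit lattice points of $R$ (controlled by the affine lattice basis $p,s_1e_1,\ldots,s_{d-2}e_{d-2}$), I would argue by contradiction that if $d_1<m$ then $\Lambda$ meets $\relint(P')$ too densely near the centroid, producing a nonzero $q'\in\relint(P')\cap\Z^2$ with $3\min\{t_a,t_b,t_c\}$ large enough and offset $w(q')$ suitably aligned for $w(q')+3\min\{t_a,t_b,t_c\}\cdot R$ to acquire an interior lattice point. I expect this divisibility to be the main obstacle, as it is the precise point at which the geometric hypothesis is converted into an arithmetic constraint on the shape of $\Lambda$.

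Granting the divisibility, the conclusion is immediate and purely lattice-theoretic. Division by $m$ identifies $m\Z^2$ with $\Z^2$ and carries $\Lambda$ onto $\tfrac1m\Lambda\subset\Z^2$, a sublattice of index $3m^2/m^2=3$; under this identification $T'=\conv\{a'/m,b'/m,c'/m\}$ is the $\P^2$-triangle on the basis $\{a'/m,b'/m\}$ of $\tfrac1m\Lambda$, with $c'/m=-a'/m-b'/m$. As $3$ is prime, $\Z^2/(\tfrac1m\Lambda)\cong\Z/3$ is cyclic, so $\tfrac1m\Lambda$ is $\GL_2(\Z)$-equivalent to $\Z\times3\Z$; the corresponding $\P^2$-triangle $\conv\{(1,0),(0,3),(-1,-3)\}$ is unimodularly equivalent to $3\Delta_2$. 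The normalized-volume check $\Vol(T')=9m^2/m^2=9=\Vol(3\Delta_2)$ confirms the identification, giving $T'\cong 3\Delta_2$.
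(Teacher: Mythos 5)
Your fibration formula $\pi^{-1}(q')\cap P=w(q')+3\min\{t_a,t_b,t_c\}\cdot R$ is correct and is close in spirit to the mechanism the paper uses, but the two steps that carry the actual content of the lemma are, respectively, missing and wrong.

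First, the ``decisive step'' is only announced, not proved: ``I would argue by contradiction that if $d_1<m$ then $\Lambda$ meets $\relint(P')$ too densely \dots\ I expect this divisibility to be the main obstacle'' is a plan, and it is precisely where all the work of the lemma lies. The paper makes this quantitative with the half-open cube $C=\sum_{i=1}^{d-2}\bigl(0,\tfrac{s_{d-1}-1}{s_{d-1}-2}\bigr]e_i$: for any nonzero interior lattice point $z'$ of $T'$, written as $\lambda y'$ with $y'\in\partial P'$ and $\lambda<\tfrac{1}{s_{d-1}-1}$, the shrunken cube $(1-\lambda)C$ still has all edge lengths $>1$, so the translate $\lambda y+(1-\lambda)C\subset P$ contains a lattice point in its relative interior, contradicting the uniqueness of $p$. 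Without an argument of this precision your claim that the fibre ``acquires an interior lattice point'' does not follow.

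Second, even granting $\Lambda\subseteq m\Z^2$, your concluding step is incorrect. The index-$3$ sublattice $\tfrac1m\Lambda\subset\Z^2$ does not determine $T'$ up to unimodular equivalence, because the $\P^2$-triangle depends on the chosen basis relative to the ambient lattice, not just on the quotient $\Z^2/(\tfrac1m\Lambda)\cong\Z/3$. Concretely, the triangle $\conv\{(1,0),(0,3),(-1,-3)\}$ you exhibit has normalized area $9$ and barycenter $0$, but its edge vectors are primitive, so by Pick's theorem it has $3$ boundary and $4$ interior lattice points, whereas $3\Delta_2$ has $9$ boundary and $1$ interior lattice point; the two are \emph{not} unimodularly equivalent, and equality of volumes confirms nothing. (The basis $\{(-1,-1),(2,-1)\}$ of the index-$3$ sublattice $\{(x,y):x\equiv y \bmod 3\}$ does give $3\Delta_2$ up to translation; your basis does not.) What actually pins down $T'$ is the statement that $0$ is its only interior lattice point together with the equality case of Ehrhart's theorem \cite{Ehr55} for planar convex bodies with barycenter $0$ and normalized area $9$ --- this is the route the paper takes, and it delivers both the latticeness of $T'$ and $T'\cong 3\Delta_2$ in one stroke, making the divisibility detour unnecessary.
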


\begin{proof}
Assume $T' \not\cong 3 \Delta_2$. As $T'$ has area $9$ and barycenter $0$, it follows from the proof of Ehrhart's conjecture in dimension two \cite{Ehr55} that $T'$ contains an interior lattice point $z' \not=0$. Now, let $y'$ be the unique point in the boundary of $P'$ such that $z' = \lambda y'$ with $\lambda > 0$. Thus, $\lambda < \frac{1}{s_{d-1}-1}$. Let $y \in \conv\{a,b,c\}$ with $\pi(y) = y'$, and define $z := \lambda y$ (so $\pi(z)=z'$). Next, let us consider the following $(d-2)$-dimensional half-open cube
\begin{equation}
C := \sum_{i=1}^{d-2} \left(0,\frac{s_{d-1}-1}{s_{d-1}-2}\right] e_i.
\label{eq:C}
\end{equation}
It is straightforward to verify that its vertex $\sum_{i=1}^{d-2} \frac{s_{d-1}-1}{s_{d-1}-2} e_i$ is contained in the relative interior of the facet $\conv\{s_1 e_1, \ldots, s_{d-2} e_{d-2}\}$ of $R$. Hence, this point is also the only point of $C$ that is not contained in $\relint(R)$. Shrinking $C$ by $(1-\lambda)$ we get a half-open cube $(1-\lambda) C$ of edge lengths
\[(1-\lambda) \left(\frac{s_{d-1}-1}{s_{d-1}-2}\right) > \left(1-\frac{1}{s_{d-1}-1}\right)\left(\frac{s_{d-1}-1}{s_{d-1}-2}\right) = 1.\]
In particular, any translation in $\R^{d-2}$ by a vector in $\R^{d-2}$ must contain a lattice point in its interior. Therefore, as the two last coordinates of $z$ are integers, $z + (1-\lambda)C = \lambda y + (1-\lambda) C \subset P$ must contain a lattice point in its relative interior. This lattice point is by construction in the interior of $P$ and does not project to $0$, so it is different from $p$, a contradiction.
\end{proof}

We define the triangle $T := \conv\{a,b,c\}/(s_{d-1}-1)$. 

\begin{lemma}
In this notation, $T$ is a lattice triangle whose projection onto $T'$ is an isomorphism of lattice polytopes. In particular, $T \cong 3 \Delta_2$.
\end{lemma}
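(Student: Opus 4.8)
The plan is to reduce the statement to a surjectivity assertion about the projection on lattices, and then to lift lattice points one at a time by re-running the cube argument of Lemma~\ref{lemma3} in a degenerate boundary instance.

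First I would set up the linear-algebra reduction. Since $a+b+c=0$, the affine hull $V:=\operatorname{aff}\{a,b,c\}$ is a rational $2$-dimensional linear subspace of $\NQ$, and $F:=\conv\{a,b,c\}=P\cap V$. Because $a'=\pi(a)$ and $b'=\pi(b)$ are linearly independent, $\pi|_V\colon V\to\R^2$ is a linear isomorphism; in particular $\pi$ maps the rank-two lattice $\Lambda_V:=N\cap V$ injectively into $\Z^2$. Writing $T=F/(s_{d-1}-1)$, the whole lemma follows once I show that
\[\pi(\Lambda_V)=\Z^2.\]
Indeed, then $\pi|_{\Lambda_V}$ is a lattice isomorphism onto $\Z^2$; the vertices $a/(s_{d-1}-1),b/(s_{d-1}-1),c/(s_{d-1}-1)$ of $T$ are the $\pi|_V$-preimages of the (lattice) vertices of $T'$, hence lie in $\Lambda_V$, so $T$ is a lattice triangle and $\pi|_T$ is an isomorphism of lattice polytopes onto $T'\cong3\Delta_2$, giving $T\cong3\Delta_2$.

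Next, since $T'\cong3\Delta_2$ its lattice points generate $\Z^2$, so it suffices to lift each $w'\in T'\cap\Z^2$, i.e.\ to show that the unique point $w\in V$ with $\pi(w)=w'$ lies in $N$. The case $w'=0$ is trivial ($w=0$), so assume $w'\neq0$. Put $\rho:=1/(s_{d-1}-1)$; as $T'=\rho P'$ we may write $w'=\rho y'$ with $y'=(s_{d-1}-1)w'\in P'$, and then $w=\rho y$, where $y\in F\subseteq P$ is the $\pi|_V$-preimage of $y'$. Note $\pi(w)=w'\in\Z^2$, so $w\in N$ exactly when its first $d-2$ coordinates are integral. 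The crux is now a boundary instance of the cube argument: reusing the half-open cube $C$ from \eqref{eq:C}, the shrunk cube $w+(1-\rho)C=\rho y+(1-\rho)C$ has edge length exactly $(1-\rho)\tfrac{s_{d-1}-1}{s_{d-1}-2}=1$, and it lies in $P$ by convexity (as $y\in P$ and $C\subseteq R\subseteq P$). Since its last two coordinates equal $w'\in\Z^2$, it contains a unique lattice point $g$. Writing $g=\rho y+(1-\rho)c_0$ with $c_0\in C$, suppose $c_0\neq q$, where $q:=\sum_{i=1}^{d-2}\tfrac{s_{d-1}-1}{s_{d-1}-2}e_i$ is the far vertex of $C$. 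Then $c_0\in\relint(R)\subseteq\relint(P)$, so $g$ lies in $\relint(P)$; but $\pi(g)=w'\neq0=\pi(p)$ forces $g\neq p$, contradicting that $p$ is the only interior lattice point of $P$. Hence $c_0=q$, and since $(1-\rho)q=p$ we obtain $g=w+p$, whence $w=g-p\in N$.

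This exhibits every lattice point of $T'$ inside $\pi(\Lambda_V)$, so $\pi(\Lambda_V)=\Z^2$, and the reduction of the second paragraph completes the proof. The main obstacle is precisely this edge-length-one degeneracy: unlike in Lemma~\ref{lemma3}, where $\lambda<\rho$ produced a cube of edge $>1$ and hence an interior lattice point distinct from $p$ with room to spare, here the cube is tight and yields no contradiction on its own. The point is to read the degeneracy the other way round: the absence of a second interior lattice point forces the unique lattice point $g$ of the cube to sit at the distinguished corner $w+p$, and it is exactly this that delivers the integrality $w\in N$.
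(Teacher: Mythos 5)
Your proof is correct and takes essentially the same route as the paper's: reduce to lifting each lattice point of $T'$ to a lattice point of $V$, then re-run the half-open cube argument of Lemma~\ref{lemma3} with $\lambda=1/(s_{d-1}-1)$, so the cube has edge length exactly $1$ and its unique lattice point is forced to sit at the corner $w+p$, yielding $w\in N$. The only cosmetic difference is that you phrase the reduction as surjectivity of $\pi$ on $N\cap\operatorname{aff}\{a,b,c\}$, whereas the paper argues directly that preimages of lattice points of $T'$ are lattice points and that $T'$ contains a lattice basis.
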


\begin{proof}
Recall that $\pi$ projects $T$ bijectively onto $T'$ and maps lattice points to lattice points. We will show that for any lattice point in $T'$ the unique preimage in $T$ is also a lattice point. As $T'$ is a lattice triangle and contains a lattice basis, this implies the statement. So, let $w'$ be a non-zero lattice point in $T'$. As $T' \cong 3 \Delta_2$, $w'$ is on the boundary of $T'$. We follow the proof of the previous lemma. We set $\lambda := \frac{1}{s_{d-1}-1}$. Again, we define $C$ as in $\eqref{eq:C}$ and observe that $(1-\lambda) C = \sum_{i=1}^{d-2} (0,1] e_i$. Let $w = (w_1, \ldots, w_{d-2}, w') \in \conv\{a,b,c\}$ with $\pi(w) = w'$. As above, $w+(1-\lambda) C$ must contain a lattice point $q$. As observed in above proof, $(w_1+1, \ldots, w_{d-2}+1,w')$ is the only point of $w+(1-\lambda) C$ that is not in $\relint(P)$. Hence, $(w_1+1, \ldots, w_{d-2}+1,w')=q \in N$, thus $w \in N$, as desired.
\end{proof}

Recall from above that $\Z e_1 + \cdots + \Z e_{d-2}$ is affinely spanned by $p, s_1 e_1, \ldots, s_{d-2} e_{d-2}$ and that the index in $N$ of the sublattice affinely spanned by $p, s_1 e_1, \ldots, s_{d-2} e_{d-2}, a,b$ equals~$3 (s_{d-1}-1)^2$ (in other words, the simplex with these vertices has volume $3 (s_{d-1}-1)^2$). Hence, the volume of the simplex with vertices $p, s_1 e_1, \ldots, s_{d-2} e_{d-2}, a/(s_{d-1}-1),b/(s_{d-1}-1)$ equals $3$. Now, since $T \cong 3 \Delta_2$, we have $w := \frac{2}{3} a/(s_{d-1}-1) + \frac{1}{3} b/(s_{d-1}-1) \in N$, thus this implies that $p, s_1 e_1, \ldots, s_{d-2} e_{d-2}, a/(s_{d-1}-1),w$ is an affine lattice basis of $N$ (as it has volume $1$). 
From the relations $p = \left(\sum_{i=1}^{d-2} \frac{s_i e_i}{s_i}\right) +\frac{1}{s_{d-1}-1} 0$ and $a+b+c=0$ we deduce that fixing this lattice basis fixes all vertices of $P$. Hence, $P$ is uniquely determined up to isomorphisms.

By \cite[Proposition~4.4, Definition~3.4]{Nil07} the convex hull of $\{0,s_1 e_1,\ldots,s_{d-2} e_{d-2},3 (s_{d-1}-1) e_{d-1},3 (s_{d-1}-1) e_d\}-(1,\ldots,1)$ is a reflexive simplex with multiplicity $3 (s_{d-1}-1)^2$. Hence, $P$ must be isomorphic to this simplex as claimed in Theorem~\ref{conj}.

$\hfill \qed$

\begin{acknowledgments}
GA and BN are~PIs in the Research Training Group Mathematical Complexity Reduction funded by the Deutsche Forschungsgemeinschaft (DFG, German Research Foundation) - 314838170, GRK 2297 MathCoRe. AK is supported by EPSRC Fellowship~EP/N022513/1. In the master thesis of~ML the upper bound on the multiplicity was determined in the case of reflexive simplices.
\end{acknowledgments}

\bibliographystyle{amsalpha}
\bibliography{bibliography}
\end{document}